\newtheorem{thm}[equation]{Theorem}
 \newtheorem{prop}[equation]{Proposition}
 \newtheorem{lem}[equation]{Lemma}
 \newtheorem{cor}[equation]{Corollary}
 \newtheorem{reduction}[equation]{Reduction}
 \theoremstyle{definition}
 \newtheorem{remark}[equation]{Remark}
\numberwithin{equation}{section}
\newcommand{\bbZ}{{\mathbb{Z}}}
\newcommand{\bbP}{{\mathbb{P}}}
\newcommand{\bbQ}{{\mathbb{Q}}}
\newcommand{\Mat}{\operatorname{M}}
\newcommand{\Ind}{\operatorname{Ind}}
\newcommand{\Res}{\operatorname{Res}}
\newcommand{\Mod}{\operatorname{Mod}}
\newcommand{\ed}{\operatorname{ed}}
\newcommand{\trdeg}{\operatorname{trdeg}}
\newcommand{\Gal}{\operatorname{Gal}}
\newcommand{\Char}{\mathop{\mathrm{char}}\nolimits}
\newcommand{\End}{\operatorname{End}}
\renewcommand{\phi}{\varphi}
\author{Dave Benson}
\address
{Institute of Mathematics \\
University of Aberdeen \\
King's College \\
Aberdeen AB24 3UE \\
Scotland, UK}
\email{d.j.benson@abdn.ac.uk}
\thanks{The research of the first author was supported by 
the Collaborative Research Group in Geometric and Cohomological
Methods in Algebra at the Pacific Institute for the Mathematical
Sciences, Vancouver, Canada (2016)}
\author[Zinovy Reichstein]{Zinovy Reichstein}
\address
{Department of Mathematics \\
University of British Columbia \\
Vancouver
\\
CANADA}
\email{reichst@math.ubc.ca}
\date{\today\ at \currenttime}
\thanks
{The second author was partially supported by NSERC Discovery Grant 250217-2012}
\begin{document}

\title[Fields of definition for representations]{Fields of definition 
for representations of associative algebras}

\keywords
{Modular representation, field of definition, finite representation type,
essential dimension}
\subjclass[2010]{16G10, 16G60, 20C05}
%%%%%%%%%%%%%%%%%%%%%%%%%%%%%%%%%%
% 16G10 Representations of Artinian rings
% 16G60 Representation type (finite, tame, wild, etc.)
% 20C05 (1973-now) Group rings of finite groups and their modules
%%%%%%%%%%%%%%%%%%%%%%%%%%%

\begin{abstract}
We examine situations, where representations of a finite-dimensional
$F$-algebra $A$ defined over a separable extension field
$K/F$, have a unique minimal field of definition. Here 
the base field $F$ is assumed to be a $C_1$-field.
In particular, $F$ could be a finite field or $k(t)$ or $k((t))$,
where $k$ is algebraically closed.

We show that a unique minimal field of definition exists if
(a) $K/F$ is an algebraic extension or (b) $A$ is 
of finite representation type. Moreover, in these situations
the minimal field of definition is a finite extension of $F$.
This is not the case if $A$ is of infinite representation type 
or $F$ fails to be $C_1$.
As a consequence, we compute the essential dimension of the functor
of representations of a finite group, generalizing a theorem
of N.~Karpenko, J.~Pevtsova and the second author.
\end{abstract}

\maketitle

\section{Introduction}

\subsection*{Notational conventions}
Throughout this paper $F$ will denote a base field and
$A$ a finite-dimensional associative algebra over $F$. 
If $K/F$ is a field extension (not necessarily algebraic), 
we will denote the tensor product $K \otimes_F A$ by $A_K$. 
Let $M$ be an $A_K$-module. Unless otherwise specified, we will
always assume that $M$ is finitely generated (or equivalently, 
finite-dimensional as a $K$-vector space). If $L/K$ is a field extension,
we will write $M_L$ for $L \otimes_K M$.

An intermediate field $F \subset K_0 \subset K$ is called
a \emph{field of definition} for $M$ if
there exists a $K_0$-module $M_0$ such that $M \cong (M_0)_K$. 
In this case we will also say that $M$ \emph{descends} to $K_0$.

\subsection*{Minimal fields of definition}

A field of definition $K_0$ of $M$ is said to be \emph{minimal}
if whenever $M$ descends to a field $L$ with $F \subset L \subset K$,
we have $K_0 \subset L$.

Minimal fields of definition do not always exist. For example,
let $F=\bbQ$ and $A$ be the quaternion algebra
\[ A=\bbQ \{ i, j ,k \} /(i^2=j^2=k^2=ijk=-1). \]
Then $A_K$ has a two dimensional module $M$ given by
\[ i\mapsto \begin{pmatrix} a&b\\b&-a\end{pmatrix},\qquad
j \mapsto \begin{pmatrix}b&-a\\-a&-b\end{pmatrix},\qquad
k\mapsto \begin{pmatrix}0&1\\-1&0\end{pmatrix} \]
over any field $K$ of characteristic $0$ having two elements 
$a$ and $b$ such that $a^2+b^2=-1$. 
Examples of such fields include $\mathbb C$, $\bbQ(\sqrt{-1})$ or 
$\bbQ(\sqrt-5)$.  If we take $K$ to be ``the generic field" of
this type, i.e., the field of fractions of $\bbQ[a, b]/(a^2+ b^2+1)$, 
then $M$ has no minimal field of definition; see
Proposition~\ref{prop.quaternion}(b).

\subsection*{$C_1$-fields}
Such examples arise because of the existence of noncommutative
division rings of finite dimension over $F$. So, it makes sense to develop
a theory over fields for which these do not exist. We say that $F$ is a
$C_1$-field if any homogeneous polynomial $f_1(x_1, \dots, x_n)$ 
of degree $d < n$ with coefficients in $F$ has a
non-trivial solution in $F^n$.
Examples of $C_1$-fields include finite fields, $k(t)$, and $k((t))$, where
$k$ is algebraically closed.  An algebraic extension of 
a $C_1$-field is again $C_1$.  Over a $C_1$-field every
every central division algebra is commutative.
For a detailed discussion of this class of fields, including proofs 
of the above assertions, we refer the reader to~\cite[Section 6.2]{gs}. 
Our first main result is as follows.

\begin{thm} \label{thm.main1} 
Let $F$ be a $C_1$-field, $A$ be a finite-dimensional $F$-algebra, 
$K/F$ be a separable algebraic field extension 
and $M$ be an an $A_K$-module.  Then $M$ has a minimal field 
of definition $F \subset K_0 \subset K$ such that $[K_0:F] < \infty$.
\end{thm}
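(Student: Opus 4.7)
The plan is to reduce to a finite separable extension, identify a candidate minimal field of definition via a Galois-stabilizer construction, and then use the $C_1$ hypothesis to resolve the descent obstruction.

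Since $M$ is finite-dimensional over $K$ and $A$ is finite-dimensional over $F$, the $A_K$-action on $M$ is encoded by finitely many matrix coefficients, which lie in some finitely generated subextension $K_1/F$; as $K/F$ is algebraic, $[K_1:F]<\infty$, and $M$ descends to $K_1$. Replacing $K$ by $K_1$, we may assume $K/F$ is finite separable. Let $\tilde K/F$ be a Galois closure of $K/F$, set $G := \Gal(\tilde K/F)$ and $H_K := \Gal(\tilde K/K)$, and define
\[
H := \{\, g \in G : g^{*} M_{\tilde K} \cong M_{\tilde K} \text{ as } A_{\tilde K}\text{-modules}\,\} \supseteq H_K,\qquad K_0 := \tilde K^{H}.
\]
Then $F \subseteq K_0 \subseteq K$ with $[K_0:F]<\infty$. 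If $M$ descends to an intermediate field $L$, every $g \in H_L = \Gal(\tilde K/L)$ preserves the iso class of $M_{\tilde K}$, giving $H_L \subseteq H$ and hence $L \supseteq K_0$. Thus, once we show that $M$ descends to $K_0$, it follows that $K_0$ is the required minimal field of definition.

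The crux is the descent of $M$ to $K_0$. For each $g \in H$, choose an $A_{\tilde K}$-iso $\phi_g \colon g^{*} M_{\tilde K} \xrightarrow{\sim} M_{\tilde K}$; the failure of $(\phi_g)$ to form a semilinear $H$-action is measured by a nonabelian $2$-cocycle with values in $\Aut_{A_{\tilde K}}(M_{\tilde K})$, and descent amounts to modifying the $\phi_g$ to trivialize this class. Decomposing $M_{\tilde K} = \bigoplus N_i^{m_i}$ into pairwise nonisomorphic indecomposables, $H$ permutes the $N_i$ preserving multiplicities; handling each $H$-orbit via restriction of scalars (an orbit sum $\bigoplus_{g \in H/H_i} (g^{*} N_i)^{m_i}$ descends to $K_0$ if and only if $N_i$ descends from $\tilde K$ to $\tilde K^{H_i}$, where $H_i \subseteq H$ is the stabilizer of $N_i$) reduces the problem to the indecomposable case. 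For indecomposable $N$, $E := \End_{A_{\tilde K}}(N)$ is local with nilpotent radical $J$, and the short exact sequence $1 \to 1+J \to E^{\times} \to D^{\times} \to 1$, where $D := E/J$, splits the obstruction into a ``semisimple'' piece in $H^{2}(H_i, D^{\times})$ and a ``unipotent'' piece coming from $1+J$. Because $F$ is $C_1$ and algebraic extensions of $C_1$-fields are $C_1$, every subfield of $\tilde K$ is $C_1$; in particular $D$ (a finite-dimensional division algebra over $\tilde K$) must be a commutative field extension, so the semisimple obstruction lies in a relative Brauer group over $\tilde K^{H_i}$, which is trivial. The unipotent piece vanishes by devissage along the powers of $J$: each successive quotient $J^{k}/J^{k+1}$ is an $\tilde K$-vector space with semilinear $H_i$-action, and additive Hilbert~90 yields vanishing group cohomology in positive degrees.

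I anticipate the main obstacle to be the careful organization of the nonabelian descent --- in particular, verifying that despite the inner-automorphism ambiguity in the $H_i$-action on $E$, the ``semisimple'' piece of the obstruction genuinely lives in $\Br(\tilde K^{H_i}) = 0$. The remaining ingredients (the finite-generation reduction, the orbit-induction, and the unipotent filtration) should then be routine.
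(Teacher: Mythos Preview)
Your proposal is correct and shares the paper's overall architecture---reduce to a finite Galois extension, take $K_0$ to be the fixed field of the Galois stabilizer of the isomorphism class of $M_{\tilde K}$, and reduce via orbit sums to descending a single indecomposable $N$ from $\tilde K$ to $\tilde K^{H_i}$---but the last step is handled differently. You argue cohomologically: the obstruction is a nonabelian $2$-cocycle in $E^\times$, and the filtration $1 \to 1+J \to E^\times \to D^\times \to 1$ lets you kill the semisimple part via $\Br(D^{H_i})=0$ (note that the relevant base is $D^{H_i}$ rather than $\tilde K^{H_i}$, but since $H_i$ acts faithfully on $\tilde K\subset D$, the fixed field $D^{H_i}$ is still finite over $F$, hence $C_1$) and the unipotent part by the vanishing of Galois cohomology of semilinear vector spaces. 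The paper instead observes that restriction of scalars gives $\tilde K \otimes_{\tilde K^{H_i}} (N_{\downarrow \tilde K^{H_i}}) \cong N^{|H_i|}$, so $N^{|H_i|}$ descends for free; it then invokes a separate cancellation lemma (Proposition~\ref{prop.M^n}: over a $C_1$-base, if $M^n$ descends to a finite subextension then so does $M$), proved by an elementary comparison of semisimplified endomorphism rings together with Krull--Schmidt, with no group cohomology at all. Your route makes the descent obstruction explicit and would transfer to other contexts where one can control $H^2$; the paper's route is more elementary and isolates Proposition~\ref{prop.M^n} as a clean reusable statement. One minor imprecision: in your orbit reduction you only need (and only straightforwardly have) the ``if'' direction.
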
 

To illustrate Theorem~\ref{thm.main1}, let us consider a simple case,
where $\Char(F) = 0$, $A := FG$ is
the group algebra of a finite group $G$, and $M$ is absolutely 
irreducible $KG$-module. Denote the character of $G$ associated to $M$
by $\chi \colon G \to K$.
We claim that in this case the minimal field of definition
is $F(\chi)$, the field generated over $F$ by the character values 
$\chi(g)$, as $g$ ranges over $G$. 
Indeed, it is clear that $F(\chi)$ has to be contained in 
any field of definition $F \subset K_0 \subset K$ of $M$. 
Thus to prove the above assertion, we only need to show that
$M$ descends to $F(\chi)$.  The minimal degree 
of a finite field extension $L/F(\chi)$, such that $M$ is 
defined over $L$ (i.e., there exists an $LG$-module with character
$\chi$), is the Schur index $s_M$; cf. \cite[Definition 41.4]{cr}.
Thus it suffices to show 
that $s_M = 1$. By~\cite[Theorem (70.15)]{cr}, $s_M$ is the index of the 
endomorphism algebra $\End_A(M)$ of $M$, which is a central simple 
algebra over $F(\chi)$.  Since $F$ is a $C_1$-field, 
and $F(\chi)$ is a finite extension of $F$, $F(\chi)$ is also 
a $C_1$-field. Hence, the index of every central   simple 
algebra over $F(\chi)$ is $1$. In particular, $s_M = 1$, and
$M$ descends to $F(\chi)$, as claimed.

\subsection*{Algebras of finite representation type}

A finite-dimensional $F$-algebra $A$ is said to be {\em of finite 
representation type} if there are only finitely many 
indecomposable finitely generated $A$-modules (up to isomorphism).

Our next result shows that for algebras of finite representation type
Theorem~\ref{thm.main1} remains valid even if the field 
extension $K/F$ is not assumed to be algebraic. 

\begin{thm} \label{thm.main2} 
Let $F$ be a $C_1$-field, $A$ be a finite-dimensional $F$-algebra 
of finite representation type, $K/F$ be a field extension,
and $M$ be an $A_K$-module.  Assume further 
that $F$ is perfectly closed in $K$.  Then $M$ has a minimal field 
of definition $F \subset K_0 \subset K$ such that $[K_0:F] < \infty$.
\end{thm}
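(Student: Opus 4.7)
The plan is to reduce Theorem~\ref{thm.main2} to Theorem~\ref{thm.main1} by descending $M$ to the algebraic closure $F'$ of $F$ in $K$. Since $F$ is perfectly closed in $K$, the extension $F'/F$ is separable algebraic, so Theorem~\ref{thm.main1} applies once the descent is in hand.

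To construct the descent, fix compatible algebraic closures $\bar F \subset \bar K$, so that $F' = \bar F \cap K$. Finite representation type forces $A_{\bar F}$ to have only finitely many indecomposable modules $V_1,\ldots,V_m$, and since each $\End_{A_{\bar F}}(V_j)$ is local with residue field $\bar F$, each $(V_j)_{\bar K}$ remains indecomposable and these exhaust the indecomposable $A_{\bar K}$-modules. Decompose $M_{\bar K} \cong \bigoplus_j (V_j)_{\bar K}^{a_j}$. The restriction map $\Gal(\bar K/K) \to \Gal(\bar F/F)$ factors through $\Gal(\bar F/F')$, since any $\sigma \in \Gal(\bar K/K)$ fixes $F' = \bar F \cap K$; the perfectly-closed hypothesis ensures that this factorization has image large enough to guarantee that the multiplicity vector $(a_j)$ is $\Gal(\bar F/F')$-invariant. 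Group the $V_j$ into $\Gal(\bar F/F')$-orbits $\mathcal{O}$, and let $F_{\mathcal{O}}$ be the fixed field of the stabilizer of a chosen representative $V_{\mathcal{O}} \in \mathcal{O}$, a finite separable extension of $F'$. The Galois-cohomological obstruction to descending $V_{\mathcal{O}}^{a_{\mathcal{O}}}$ from $\bar F$ to $F_{\mathcal{O}}$ reduces, after filtering by the Jacobson radical of $\End_{A_{\bar F}}(V_{\mathcal{O}})$ (whose graded pieces are killed by additive Hilbert~90), to the group $\Br(F_{\mathcal{O}})$; since $F_{\mathcal{O}}$ is algebraic over the $C_1$-field $F$, it is itself $C_1$, so $\Br(F_{\mathcal{O}}) = 0$ and the descent exists. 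Assembling these orbit-wise descents via restriction of scalars along $F_{\mathcal{O}}/F'$ produces an $A_{F'}$-module $M'$ with $M' \otimes_{F'} \bar F \cong \bigoplus_j V_j^{a_j}$. Both $M$ and $M' \otimes_{F'} K$ are then $K$-forms of $M_{\bar K}$, and the $H^1$-set classifying such forms vanishes by Shapiro combined with multiplicative and additive Hilbert~90. Hence $M \cong M' \otimes_{F'} K$, so $M$ descends to $F'$ via $M'$.

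Applying Theorem~\ref{thm.main1} to the $A_{F'}$-module $M'$ then produces a minimal field of definition $K_0 \subset F'$ with $[K_0 : F] < \infty$, and this $K_0$ is a fortiori a field of definition of $M$. For minimality, suppose $M$ descends to $L$ with $F \subset L \subset K$. Since $F$ is also perfectly closed in $L$, the same descent argument applied to $M_L$ yields a descent of $M_L$ to $L' := L \cap F'$; by uniqueness of the $F'$-form of $M$ (via the same $H^1$-vanishing), the base change of this descent to $F'$ coincides with $M'$. Thus $M'$ itself descends to $L'$, and the minimality in Theorem~\ref{thm.main1} gives $K_0 \subset L' \subset L$.

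The principal obstacle is the Galois descent of the indecomposable summands from $\bar F$ to $F_{\mathcal{O}}$. This is exactly where the $C_1$-hypothesis on $F$ plays its essential role: it ensures that the Brauer groups of all finite algebraic extensions of $F$ vanish, thereby removing the obstruction to realizing the $\Gal$-invariant formal module $V_{\mathcal{O}}^{a_{\mathcal{O}}}$ as an actual $A_{F_{\mathcal{O}}}$-module and hence assembling $M'$.
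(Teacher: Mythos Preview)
Your route is genuinely different from the paper's. The paper verifies the two hypotheses of Lemma~\ref{lem.reduction}: condition~(b) is Theorem~\ref{thm.main1}, while condition~(a) is the substantive Theorem~\ref{thm.frt}(a), proved by applying Tachikawa's theorem to the (generally infinite-dimensional) restriction $M_{\downarrow F}$ and extracting a finitely generated indecomposable summand that forces $M$ to descend to a finite extension of $F$. No passage to an algebraic closure, no Galois cohomology.

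Your approach instead passes to $\bar F\subset\bar K$, classifies indecomposables there, and descends by $H^1$-vanishing. This is an appealing strategy, but the proof has a real gap at its load-bearing step. You assert that the $(V_j)_{\bar K}$ ``exhaust the indecomposable $A_{\bar K}$-modules''; the reason you give (that $\End_{A_{\bar F}}(V_j)$ is local with residue field $\bar F$) shows only that each $(V_j)_{\bar K}$ \emph{remains} indecomposable, not that no new indecomposables appear over $\bar K$. That exhaustion claim is essentially the whole content of Theorem~\ref{thm.frt} in the special case of an algebraically closed base, and the standard way to prove it is again via Tachikawa's theorem applied to $M_{\downarrow\bar F}$ --- i.e.\ exactly the engine the paper uses. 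So as written, your argument presupposes what the paper's Theorem~\ref{thm.frt} supplies.

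There is also a secondary issue with using $\bar F$ rather than $F^{sep}$. If $F$ is imperfect (say $F=k(t)$ in positive characteristic), then $F$ is \emph{not} perfectly closed in $\bar F$, so you cannot invoke the paper's machinery (Corollary~\ref{cor.frt1}) to conclude that $A_{\bar F}$ has finite representation type; that too would need a separate argument. Replacing $\bar F$ by $F^{sep}$ throughout sidesteps this, and the Galois-cohomological bookkeeping still goes through --- but the exhaustion gap over the transcendental extension $K^{sep}/F^{sep}$ remains and must be filled, e.g.\ by citing or reproving Tachikawa's decomposition result.
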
 

\subsection*{Essential dimension}
Given the $A_K$-module $M$, 
the \emph{essential dimension} $\ed(M)$ of $M$ over $F$ is defined
as the minimal value of the transcendence degree $\trdeg(K_0/F)$,
where the minimum is taken over all fields of definition
$F \subset K_0 \subset K$. The integer $\ed(M)$ may be viewed as
a measure of the complexity of $M$. 
Note that $\ed(M)$ is well-defined, irrespective of whether $M$ 
has a minimal field of definition or not. We also remark that this   
number implicitly depends on the base field $F$, which is assumed 
to be fixed throughout. As a consequence of Theorem~\ref{thm.main2},
we will deduce the following.

\begin{thm} \label{thm.main3}
Let $F$ be a $C_1$-field, $A$ be finite-dimensional $F$-algebra 
of finite representation type, $K/F$ be a field extension,
and $M$ be an $A_K$-module. Then $\ed(M) = 0$. 
\end{thm}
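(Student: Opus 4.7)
The plan is to reduce Theorem~\ref{thm.main3} to Theorem~\ref{thm.main2} by replacing the base field $F$ with its perfect closure inside $K$. Let $F'$ denote this perfect closure: $F' = F$ if $\Char(F) = 0$, and
\[
F' \;=\; \{\, x \in K : x^{p^n} \in F \text{ for some } n \ge 0\,\}, \qquad p := \Char(F),
\]
otherwise. A direct check shows that $F'$ is a subfield of $K$, purely inseparable (hence algebraic) over $F$, and perfectly closed in $K$.

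Next I want to apply Theorem~\ref{thm.main2} with base field $F'$, algebra $A_{F'} = F' \otimes_F A$, extension $K/F'$, and module $M$ viewed as an $(A_{F'})_K \cong A_K$-module. Three hypotheses must be checked. First, $F'$ is a $C_1$-field, since it is algebraic over the $C_1$-field $F$ and being $C_1$ is inherited by algebraic extensions, as noted in the excerpt. Second, $F'$ is perfectly closed in $K$, by the construction above. Third, $A_{F'}$ is again of finite representation type over $F'$; this is the substantive point, and I expect it to be the main obstacle in the plan. It rests on the theorem that finite representation type for a finite-dimensional algebra is preserved under arbitrary extension of the base field (due essentially to Jensen--Lenzing via model-theoretic methods, and also accessible via Auslander--Reiten theory); I would invoke this as a black box, or, failing that, reprove it in the special case of purely inseparable extensions needed here.

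Once these are in place, Theorem~\ref{thm.main2} produces a field of definition $K_0$ for $M$ satisfying $F' \subseteq K_0 \subseteq K$ and $[K_0 : F'] < \infty$. Since $F'/F$ is algebraic and $K_0/F'$ is finite, the tower $K_0/F$ is algebraic, so $\trdeg(K_0/F) = 0$. Thus $\ed(M) \le \trdeg(K_0/F) = 0$, and as essential dimension is a non-negative integer we conclude $\ed(M) = 0$, as required.
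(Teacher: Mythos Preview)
Your proposal is correct and follows exactly the paper's route: pass to the perfect closure $F^{pf}$ of $F$ inside $K$, apply Theorem~\ref{thm.main2} over that new base, and conclude that $M$ descends to a field algebraic over $F$. The paper's write-up is terser and does not pause to verify that $A_{F^{pf}}$ remains of finite representation type over $F^{pf}$; you correctly flag this as the only nontrivial hypothesis to check and handle it by citing the Jensen--Lenzing preservation theorem. Apart from that added care, the two arguments are identical.
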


Both Theorem~\ref{thm.main2} and~\ref{thm.main3} fail if 
we do not require $F$ to be a $C_1$-field; see Section~\ref{sect.example}.

\subsection*{The essential dimension of the functor of $A$-modules}
We will also be interested in the essential dimension 
$\ed(\Mod_A)$ of the functor $\Mod_A$ from the category 
of field extensions of $F$ to the category of sets, which
associates to a field $K$, the set of isomorphism classes of $A_K$-modules. 
By definition,  
\[ \ed(\Mod_A) := \sup \; \ed(M) \, , \]
where the supremum is taken over all field extensions $K/F$ 
and all finitely generated $A_K$-modules $M$. The value
of $\ed(\Mod_A)$ may be viewed as 
a measure the complexity of the representation theory of $A$.
For generalities on the notion of essential dimension 
we refer the reader to~\cite{bf, reichstein1, reichstein2, 
merkurjev1, merkurjev2}. 
Essential dimensions of representations of finite groups 
and finite-dimensional algebras are studied in~\cite{krp} 
and \cite[Section 3]{bdh}.
% The study of essential dimension for representations of finite groups $G$
% (or equivalently, for $A$-modules, where $A = FG$ is the group 
% algebra of $G$), was initiated in~\cite{krp}.

Note that while $\ed(M) < \infty$, for any given $A_K$-module $M$ 
(see Lemma~\ref{lem.fin-gen}), $\ed(\Mod_A)$ may be infinite. In
particular, in the case, where $A = FG$ is the group algebra of 
a finite group $G$ over a field $F$, it is shown in~\cite[Theorem~14.1]{krp}
that $\ed(\Mod_A) = \infty$, provided that $F$ is a field of
characteristic $p > 0$ and $G$ has a subgroup isomorphic 
to $(\bbZ/p \bbZ)^2$. Our final main result is the following
amplification of~\cite[Theorem~14.1]{krp}.

\begin{thm} \label{thm.main4}
Let $G$ be a finite group and
$F$ be a field of characteristic $p$. Then
the following conditions are equivalent:
\begin{itemize}
\item[\rm (1)] The $p$-Sylow subgroup of $G$ is cyclic, 
\item[\rm (2)] $\ed(\Mod_{FG}) = 0$, 
\item[\rm (3)] $\ed(\Mod_{FG}) < \infty$.
\end{itemize}
\end{thm}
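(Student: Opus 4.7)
The implication $(2)\Rightarrow(3)$ is immediate.

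For $(1)\Rightarrow(2)$, I would begin by invoking Higman's theorem: the $p$-Sylow of $G$ is cyclic if and only if $FG$ is of finite representation type. Although $F$ is not assumed to be $C_1$, so Theorem~\ref{thm.main3} does not literally apply, finite representation type still guarantees only finitely many absolutely indecomposable $\overline{F}G$-modules $N_1,\ldots,N_r$, all defined over some common finite Galois extension $F'/F$. For any $A_K$-module $M$, Krull--Schmidt applied to $M\otimes_K\overline{K}$ yields a decomposition as a direct sum of the $N_i$, and the $\Gal(\overline{K}/K)$-action on the combinatorial data factors through the finite quotient acting on $\{N_1,\ldots,N_r\}$. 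A Galois-descent argument then shows that every indecomposable summand of $M$ is already defined over $F'\cap K$, which is algebraic over $F$; hence $\ed(M)=0$.

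For $(3)\Rightarrow(1)$, I argue contrapositively. Assume the $p$-Sylow $P$ of $G$ is not cyclic. If $G$ contains $(\bbZ/p\bbZ)^2$, the conclusion $\ed(\Mod_{FG})=\infty$ follows from~\cite[Theorem~14.1]{krp}. Otherwise $P$ has a unique subgroup of order $p$ yet is non-cyclic, so by the classification of finite $p$-groups we must have $p=2$ and $P\cong Q_{2^n}$ (generalized quaternion) for some $n\ge 3$. For any $FP$-module $N$, the Mackey formula realizes $N$ as a summand of $\Res_P^G \Ind_P^G N$; combining the inequality $\ed(\Res M)\le\ed(M)$ (restriction commutes with base change) with $\ed(M')\le\ed(M)$ for a direct summand $M'$ of $M$ (proved by a Krull--Schmidt argument, extracting the idempotent projector over a finite extension of any field of definition of $M$), one obtains $\ed(N)\le\ed(\Ind_P^G N)$, so $\ed(\Mod_{FG})\ge\ed(\Mod_{FP})$. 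It therefore suffices to prove $\ed(\Mod_{FQ_{2^n}})=\infty$.

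For this I would exploit the tame representation type of $FQ_{2^n}$ in characteristic $2$: it admits a $1$-parameter family $\{M_\lambda\}$ of pairwise non-isomorphic indecomposable band modules, with $\lambda$ in an open subset of $\bbA^1_{\overline{F}}$, descending generically to $F$. Given any integer $N\ge 1$, let $K=F(t_1,\ldots,t_N)$ with algebraically independent $t_i$, and form $M:=M_{t_1}\oplus\cdots\oplus M_{t_N}$. By Krull--Schmidt, any field of definition $K_0\subset K$ of $M$ must realize each summand $M_{t_i}$ (up to a finite extension), and since the parameters faithfully classify isomorphism classes, $K_0$ must algebraically determine each $t_i$, forcing $\trdeg(K_0/F)\ge N$; as $N$ is arbitrary, $\ed(\Mod_{FQ_{2^n}})=\infty$. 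The main obstacle is this last step --- rigorously constructing the $1$-parameter family of band modules over $F$ (not merely $\overline{F}$) and verifying that algebraically independent choices of parameter cannot be collapsed by any field of definition of small transcendence degree.
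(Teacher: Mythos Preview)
Your proposal has a genuine gap in $(3)\Rightarrow(1)$, namely the generalized quaternion case, which you yourself flag as the main obstacle. The paper avoids your case split entirely by a simple but decisive observation: if the defect group (equivalently, the $p$-Sylow) $D$ is non-cyclic, then its Frattini quotient $D/\Phi(D)$ is elementary abelian of rank $r\ge 2$, and this holds for generalized quaternion groups as well, since $Q_{2^n}/\Phi(Q_{2^n})\cong(\bbZ/2\bbZ)^2$. The paper then constructs two-dimensional $KD$-modules $M_i$ over a purely transcendental extension $K=F(t_{i,j})$ on which each chosen generator $g_j$ of $D$ acts by $\left(\begin{smallmatrix}1&t_{i,j}\\0&1\end{smallmatrix}\right)$; these are annihilated by $J^2(KD)$ and hence are effectively modules for the elementary abelian quotient. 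From there the argument runs uniformly---via the Mackey decomposition of $\Res_{G,D}(e\cdot\Ind_{D,G}(\bigoplus_i M_i))$ and a descent lemma for finite unions of hyperplanes in $\bbP^{r-1}$---to give $\ed\ge n(r-1)$ for every $n$. No band-module machinery or tame-type classification is needed; the quaternion case is absorbed into the general one. Your reduction $\ed(\Mod_{FG})\ge\ed(\Mod_{FP})$ is correct but unnecessary, and the proposed one-parameter family of band modules is a detour whose rigorous execution you have not supplied.

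On $(1)\Rightarrow(2)$: you correctly observe that Theorem~\ref{thm.main3} requires $F$ to be $C_1$, whereas the statement of Theorem~\ref{thm.main4} does not explicitly impose this. The paper nonetheless invokes Theorem~\ref{thm.main3} directly (consistent with the standing $C_1$ hypothesis announced in the abstract). Your Galois-descent workaround is plausible in outline but, as written, skips the substantive step: from the fact that all absolutely indecomposable $\overline{F}G$-modules are defined over a fixed finite extension $F'/F$, one must still argue that an arbitrary indecomposable $A_K$-module descends to an \emph{algebraic} extension of $F$ contained in $K$, and the sentence ``the $\Gal(\overline{K}/K)$-action on the combinatorial data factors through a finite quotient'' does not by itself deliver this.
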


\section{Preliminaries on fields of definition}

\begin{lem} \label{lem.fin-gen}
Let $A$ be a finite-dimensional $F$-algebra, $K/F$ be a field extension
and $M$ be an $A_K$-module. Then $M$ descends to an intermediate subfied
$F \subset E \subset K$, where $E/F$ is finitely generated.
\end{lem}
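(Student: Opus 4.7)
The plan is to produce a finitely generated intermediate field $E$ by writing down the $A$-action on $M$ in an explicit basis, and then collecting all structure constants that appear.

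Concretely, first I would fix an $F$-basis $a_1,\dots,a_m$ of $A$ (which is finite since $A$ is finite-dimensional over $F$) and a $K$-basis $e_1,\dots,e_n$ of $M$ (finite since $M$ is finitely generated over $A_K$, and $A_K$ is finite-dimensional over $K$, so $M$ is finite-dimensional over $K$). The $A_K$-action on $M$ is determined by expansions
\[
(1\otimes a_i)\cdot e_j \;=\; \sum_{\ell=1}^{n} c_{ij\ell}\, e_\ell, \qquad c_{ij\ell}\in K.
\]
Let $E\subset K$ be the subfield of $K$ generated over $F$ by the finite set $\{c_{ij\ell}\}_{i,j,\ell}$. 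Then $E/F$ is finitely generated, and $F\subset E\subset K$.

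Next, I would define a candidate descent $M_0$. Let $M_0$ be the free $E$-module with basis $e_1,\dots,e_n$, and define an $A_E$-action by the same formulas $(1\otimes a_i)\cdot e_j := \sum_\ell c_{ij\ell}\, e_\ell$ (which makes sense because $c_{ij\ell}\in E$), extended $E$-linearly in the second factor of $A_E = E\otimes_F A$ and $F$-linearly in the $a_i$. To check that this actually defines an $A_E$-module structure, one needs to verify the associativity identities $(a_i a_j)\cdot e_k = a_i\cdot(a_j\cdot e_k)$ for all $i,j,k$. But each such identity is a system of polynomial equations in the $c_{ij\ell}$ and the structure constants of $A$ (which lie in $F\subset E$), and these identities already hold in $M$ viewed as an $A_K$-module. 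Since they hold in $K$ and involve only elements of $E$, they hold in $E$.

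Finally, I would compare $(M_0)_K$ with $M$: both are $A_K$-modules of $K$-dimension $n$ with a distinguished basis $e_1,\dots,e_n$ on which the generators $1\otimes a_i$ act by identical matrices, so the $K$-linear map sending the basis of $(M_0)_K$ to that of $M$ is an $A_K$-module isomorphism. Hence $M$ descends to $E$. No step here is a serious obstacle; the only thing to be a bit careful about is formally checking that the associativity relations transfer from $K$ to $E$, which is immediate because $E\hookrightarrow K$ is injective and the relations are polynomial in the chosen constants.
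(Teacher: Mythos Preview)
Your proof is correct and follows essentially the same approach as the paper: choose a basis of $M$, write the action of a finite generating set of $A$ as matrices, and adjoin the finitely many matrix entries to $F$. The paper uses $F$-algebra generators of $A$ rather than a full $F$-basis and omits the explicit verification that the resulting $M_0$ is an $A_E$-module with $(M_0)_K\cong M$, but your more careful treatment of these points is entirely in the same spirit.
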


\begin{proof}
Suppose $a_1, \dots, a_r$ generate $A$ as an $F$-algebra.
Choose an $F$-vector space basis for $M$. Then
the $A$-module structure of $M$ is completely determined by
the matrices representing multiplication by $a_1, \dots, a_r$ in
this basis.  Each of these matrices has $n^2$ entries in $K$, where
$n = \dim_F(M)$.  Let $E \subset K$ be the field extension of $F$ obtained 
by adjoining these these $rn^2$ entries to $F$. Then $M$ descends 
to $E$. 
\end{proof}

Next we recall the classical theorem of Noether and Deuring.
For a proof, see~\cite[(29.7)]{cr} or \cite[Lemma 5.1]{bp}.

\begin{thm} \label{thm.nd} {\rm(Noether-Deuring Theorem)}
Let $K/E$ be a field extension, $A$ be a finite-dimensional $E$-algebra,
and $M_1$, $M_2$ and $M$ be $A$-modules.
If $K \otimes_E M_1$ and $K \otimes_E M_2$ are isomorphic 
as $A_K$-modules, then $M_1$ and $M_2$ are isomorphic as $A$-modules.
\qed
\end{thm}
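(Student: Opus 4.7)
The plan is to linearize the problem via base change of Hom-spaces and then apply a determinant/non-vanishing polynomial argument. Let $V := \Hom_A(M_1, M_2)$, a finite-dimensional $E$-vector space since $A$ is finite-dimensional and $M_1$, $M_2$ are finitely generated; because $M_1$ is finitely presented, the natural map
\[ K \otimes_E V \;\longrightarrow\; \Hom_{A_K}(K\otimes_E M_1,\, K\otimes_E M_2) \]
is an isomorphism. Fix an $E$-basis $f_1,\dots,f_n$ of $V$ and $E$-bases of $M_1$ and $M_2$. The hypothesis $K\otimes_E M_1\cong K\otimes_E M_2$ forces $\dim_E M_1=\dim_E M_2=:d$, so each $f_i$ is represented by a $d\times d$ matrix over $E$. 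Set
\[ p(x_1,\dots,x_n) := \det\Bigl(\sum_{i=1}^n x_i f_i\Bigr) \in E[x_1,\dots,x_n]. \]
Writing the given isomorphism as $\phi=\sum c_i(1\otimes f_i)$ with $c_i\in K$, bijectivity of $\phi$ yields $p(c_1,\dots,c_n)\ne 0$; in particular $p$ is a nonzero polynomial.

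When $E$ is infinite, $p$ cannot vanish identically on $E^n$, so some $(e_1,\dots,e_n)\in E^n$ satisfies $p(e_1,\dots,e_n)\ne 0$, and the corresponding $f:=\sum e_i f_i\in\Hom_A(M_1,M_2)$ is an $E$-linear map between $d$-dimensional spaces with nonzero determinant, hence an $A$-module isomorphism.

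When $E$ is finite, the generic-point step breaks, and we detour through $\bar E$. Embed $K$ in an algebraically closed $\Omega\supseteq\bar E$; base change gives $\Omega\otimes_E M_1\cong\Omega\otimes_E M_2$, and the infinite-base-field case (now with base $\bar E$) yields $\bar E\otimes_E M_1\cong\bar E\otimes_E M_2$ as $A_{\bar E}$-modules. Because finitely many matrix entries suffice to record this isomorphism and $\bar E/E$ is a directed union of finite subextensions, it descends to a \emph{finite} subextension $L/E$: thus $L\otimes_E M_1\cong L\otimes_E M_2$ as $A_L$-modules and hence as $A$-modules. Using the $E$-module isomorphism $L\cong E^{[L:E]}$, this becomes $M_1^{[L:E]}\cong M_2^{[L:E]}$ as $A$-modules, and the Krull--Schmidt theorem (applicable since $A$ is finite-dimensional over $E$) cancels the common multiplicity to conclude $M_1\cong M_2$.

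The principal conceptual obstacle is the finite base-field case: over an infinite $E$, density of $E$-points in a nonempty affine open suffices, whereas over a finite $E$ this density fails and one must instead route the argument through $\bar E$ and finish with Krull--Schmidt cancellation. The other ingredients---base change of $\Hom$ for finitely presented modules and the translation of isomorphisms into nonvanishing determinants---are routine.
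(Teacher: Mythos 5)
Your proof is correct, and since the paper does not prove this statement itself (it cites Curtis--Reiner (29.7) and Benson--Parker, Lemma 5.1), there is nothing to diverge from: your argument is the standard one, combining flat base change of $\Hom_A(M_1,M_2)$ with the nonvanishing-determinant specialization over an infinite base, and handling finite $E$ by descending an isomorphism from $\bar{E}$ to a finite subextension $L/E$ and cancelling via Krull--Schmidt after restricting $L\otimes_E M_i \cong M_i^{[L:E]}$ to $A$. All the steps (finite presentation for the base-change isomorphism, $A$-linearity and invertibility of the matrix over $L$, and applicability of Krull--Schmidt for finitely generated modules over the finite-dimensional algebra $A$) are justified, and your use of the paper's standing finite-generation convention is consistent with the statement.
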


\begin{lem} \label{lem.linear}
Let $F$ be a field, $A$ be a finite-dimensional $F$-algebra, 
$F \subset E \subset K$ be a field extension, $N$ be $A_E$-module,
and $M = N_K$ and $F \subset E_0 \subset E$ be an intermediate field.
Then  

\smallskip
(a) $M$ descends to $E_0$ if and only if $N$ descends to $E_0$.

\smallskip
(b) If $F \subset E_{min} \subset K$ is a minimal field of definition for $M$, 
then $E_{min}$ is a minimal field of definition for $N$.
\end{lem}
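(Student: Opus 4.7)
My approach is to prove (a) directly using the Noether--Deuring theorem (Theorem~\ref{thm.nd}), and then deduce (b) from (a) together with the defining property of minimality. In (a), the ``if'' direction is routine base change: if $N \cong N_0 \otimes_{E_0} E$ for some $A_{E_0}$-module $N_0$, then
\[ M = N_K \cong (N_0 \otimes_{E_0} E) \otimes_E K \cong N_0 \otimes_{E_0} K, \]
which exhibits a descent of $M$ to $E_0$. For the ``only if'' direction, suppose $M \cong M_0 \otimes_{E_0} K$ for some $A_{E_0}$-module $M_0$. I would then consider the two $A_E$-modules $N$ and $M_0 \otimes_{E_0} E$, extend each along $E \subset K$, and observe that both resulting $A_K$-modules are isomorphic to $M$. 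Applying Noether--Deuring to the finite-dimensional $E$-algebra $A_E$ and the extension $K/E$ yields $N \cong M_0 \otimes_{E_0} E$ as $A_E$-modules, which gives the desired descent of $N$ to $E_0$.

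For part (b), I would first note that $E$ is itself a field of definition for $M$ (witnessed by $N$), so the minimality of $E_{min}$ forces $E_{min} \subset E$. With this containment secured, part (a) applied with $E_0 = E_{min}$ implies that $M$ descends to $E_{min}$ if and only if $N$ does; since $M$ descends to $E_{min}$, so does $N$. To verify that $E_{min}$ is also \emph{minimal} among fields of definition for $N$, I would take any intermediate $F \subset L \subset E$ to which $N$ descends, tensor up to $K$ to see that $M$ descends to $L$, and then invoke the minimality of $E_{min}$ for $M$ to conclude $E_{min} \subset L$.

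I do not anticipate a serious obstacle here; the content of the lemma is essentially bookkeeping around the Noether--Deuring theorem. The one point requiring a moment's care is deducing $E_{min} \subset E$ in part (b) before one is entitled to apply (a), since (a) is stated only for intermediate fields $E_0 \subset E$.
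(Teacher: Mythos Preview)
Your proposal is correct and follows essentially the same route as the paper: part (a) is obtained from the Noether--Deuring theorem exactly as you describe, and part (b) is deduced by first observing $E_{min}\subset E$ (since $E$ is a field of definition for $M$) and then invoking (a). Your write-up of (b) is in fact a bit more explicit than the paper's, which simply says ``and part (b) follows'' after noting that $E_{min}$ is a field of definition for $N$; your extra sentence checking minimality for $N$ is the natural unpacking of that phrase.
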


\begin{proof} 
(a) If $N$ descends to $E_0$, then clearly so does $M$. Conversely, 
suppose $M$ descends to $E_0$.
That is, there exists a $E_0$-module $N_0$ such that 
$K \otimes_{E_0} N_0 \simeq M$ as an $A_K$-module. Consider the
$A_E$-modules $N_1 := E \otimes_{E_0} N_0$ and $N_2 := N$.
Both become isomorphic to $M_K$ over $K$. By Theorem~\ref{thm.nd},
$N_1 \simeq N_2$ as $A_E$-modules. In other words, $N$ descends to $E_0$,
as desired.

\smallskip
(b) Since $E$ is a field of definition for $M$, we have $E_{min} \subset E$.
By part (a), $E_{min}$ is a field of definition for $N$, and part (b) follows.
\end{proof}

We finally come to the main result of this section.

\begin{prop} \label{prop.M^n} Suppose $F$ is a $C_1$-field, $A$ is a
finite-dimensional $F$-algebra, $K/F$ is a field extension,
$M$ is a finitely generated $A_K$-module, and
$F \subset K_0 \subset K$ is an intermediate field, such 
that $[K_0 : F] < \infty$.

If $M^n$ is defined over $K_0$ for some positive integer $n$, 
then so is $M$.
\end{prop}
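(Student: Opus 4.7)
The plan is to realize the required descent $M_0$ as a direct summand of $N_0$, cut out by an idempotent in $B_0 := \End_{A_{K_0}}(N_0)$. Set $B := \End_{A_K}(M)$. Since $N_0$ is finitely presented over $A_{K_0}$, base change yields a canonical isomorphism
\[ B_0 \otimes_{K_0} K \;\cong\; \End_{A_K}((N_0)_K) \;\cong\; \End_{A_K}(M^n) \;=\; M_n(B), \]
so in particular $Z(B_0) \otimes_{K_0} K \cong Z(B)$. Decompose by Krull--Schmidt: $N_0 = \bigoplus_l P_l^{b_l}$ and $M = \bigoplus_i L_i^{a_i}$ into pairwise non-isomorphic indecomposable summands.

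The crucial use of the $C_1$ hypothesis is the following. Since $K_0/F$ is finite and $F$ is $C_1$, the field $K_0$ is itself $C_1$, so every finite-dimensional division $K_0$-algebra is a field. In particular each $E_l := \End_{A_{K_0}}(P_l)/\mathrm{rad}$ is a field. Two consequences follow. First, $B_0/J(B_0) = \prod_l M_{b_l}(E_l)$ has commutative center $\prod_l E_l$, whose primitive idempotents lift to primitive central idempotents $f_l \in Z(B_0)$ with $f_l N_0 = P_l^{b_l}$. Second, since $E_l$ is commutative, $(E_l \otimes_{K_0} K)_{\mathrm{red}}$ is a finite product of fields; comparing this with the Wedderburn decomposition of $\End_{A_K}((P_l)_K)/\mathrm{rad} = (\End_{A_{K_0}}(P_l) \otimes_{K_0} K)/\mathrm{rad}$ forces $(P_l)_K = \bigoplus_{i \in S_l} L_i$ to be multiplicity-free.

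Let $\epsilon_i \in Z(B)$ be the primitive central idempotent with $\epsilon_i M = L_i^{a_i}$. Under the isomorphism $Z(B_0) \otimes_{K_0} K \cong Z(B)$, orthogonality of the $f_l$'s and $\sum_l f_l = 1$ force $f_l \otimes 1 = \sum_{i \in T_l} \epsilon_i$ for subsets $T_l$ that partition the index set of $L_i$'s. Then
\[ (P_l)_K^{b_l} \;=\; (f_l N_0)_K \;=\; (f_l \otimes 1)\, M^n \;=\; \bigoplus_{i \in T_l} L_i^{n a_i}, \]
and combining this with $(P_l)_K^{b_l} = \bigoplus_{i \in S_l} L_i^{b_l}$ via Krull--Schmidt over $K$ forces $S_l = T_l$ and $b_l = n a_i$ for every $i \in T_l$; in particular $n \mid b_l$. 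Setting $M_0 := \bigoplus_l P_l^{b_l/n}$ yields an $A_{K_0}$-module with $(M_0)_K \cong \bigoplus_i L_i^{a_i} = M$, completing the descent.

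The main obstacle is the multiplicity-free decomposition of $(P_l)_K$. Without the $C_1$ hypothesis, $E_l$ could be a noncommutative division algebra and $(P_l)_K$ could contain some $L_i$ with multiplicity greater than one; the resulting system of Krull--Schmidt multiplicity equations would then read $\sum_l b_l c_{l,i} = n a_i$ for certain positive integers $c_{l,i}$, and need not admit non-negative integer solutions $d_l \le b_l$ with $\sum_l d_l c_{l,i} = a_i$. Commutativity of $E_l$, coming from $K_0$ being $C_1$, is precisely what forces the $S_l$'s to be disjoint and the system to decouple.
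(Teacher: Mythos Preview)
Your strategy is sound and in fact treats the general $M$, whereas the paper's proof tacitly takes $M$ indecomposable (it invokes Fitting's Lemma to write $\End^{ss}_{A_K}(M^n)\cong \Mat_n(D)$ for a single division ring $D$). There is, however, a genuine gap in the step that invokes central idempotents. You assert that the primitive idempotents of $Z(B_0/J(B_0))=\prod_l E_l$ lift to primitive central idempotents $f_l\in Z(B_0)$ with $f_lN_0=P_l^{b_l}$. The projection onto the isotypic summand $P_l^{b_l}$ is indeed an idempotent of $B_0$, but it lies in $Z(B_0)$ only if $\Hom_{A_{K_0}}(P_l,P_m)=0$ for all $m\ne l$, which is nowhere assumed. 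In general $Z(B_0)\to Z(B_0/J(B_0))$ is not surjective: already for $B_0=\bigl(\begin{smallmatrix}K_0&K_0\\0&K_0\end{smallmatrix}\bigr)$ one has $Z(B_0)=K_0$ while $Z(B_0/J(B_0))=K_0\times K_0$. The same objection applies to your $\epsilon_i$: the projection of $M$ onto $L_i^{a_i}$ need not lie in $Z(B)$, so the identification $f_l\otimes 1=\sum_{i\in T_l}\epsilon_i$ is not justified.

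The repair uses exactly the idea you already introduced. To see that the sets $S_l$ are pairwise disjoint, run your multiplicity-free argument with $P_l\oplus P_m$ in place of a single $P_l$: since $P_l\not\cong P_m$ are indecomposable, $\End_{A_{K_0}}(P_l\oplus P_m)/\mathrm{rad}\cong E_l\times E_m$ is still a product of fields, hence commutative, so $\bigl(\End_{A_{K_0}}(P_l\oplus P_m)\otimes_{K_0}K\bigr)/\mathrm{rad}$ is again a product of fields and $(P_l\oplus P_m)_K$ is multiplicity-free; thus no $L_i$ can occur in both $(P_l)_K$ and $(P_m)_K$. With disjointness in hand, your Krull--Schmidt count gives $b_l=na_i$ for every $i\in S_l$, and $M_0:=\bigoplus_l P_l^{b_l/n}$ descends $M$ as you claimed. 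No central idempotents are needed.
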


\begin{proof} 
Set $\End^{ss}_{A_K}(M)$ to be the quotient of $\End_{A_K}(M)$ 
by its Jacobson radical. 
By our assumption $M^n \simeq K \otimes_{K_0} N$ for some $A_{K_0}$-module $N$.
By Fitting's Lemma, \[ \End^{ss}_{A_K}(M^n) \simeq \Mat_n(D), \]
where $D$ is a finite-dimensional division algebra over some finite field
extension $K'$ of $K$. On 
the other hand,
\begin{equation} \label{e.M^n}
\Mat_n(D) \simeq \End^{ss}_{A_K}(M^n) \simeq 
\End^{ss}_{A_K}(K \otimes_{K_0} N) \simeq 
K \otimes_{K_0} \End^{ss}_{A_{K_0}}(N) \, . 
\end{equation}
We conclude that $\End^{ss}_{A_{K_0}}(N)$ is a simple algebra over $K_0$,
i.e., 
\begin{equation} \label{e.End}
\End^{ss}_{A_{K_0}}(N) \simeq M_m(D_0) 
\end{equation}
over $K_0$, for some integer $m \geqslant 0$ and some 
finite-dimensional central division algebra $D_0$ over 
a field $K_0'$ such that $K_0'$ is a finite extension of $K_0$.
Now recall that we are assuming that $F$ is a $C_1$-field and 
\[ F \subset K_0 \subset K_0' \]
are finite field extensions. Hence, $K_0'$ is also a $C_1$-field, and thus 
every finite-dimensional division algebra over $K_0'$ 
is commutative. In particular, $D_0 = K_0'$, is a field, and 
\[ \Mat_n(D) \simeq K \otimes_{K_0} \End^{ss}_{A_{K_0}}(N) \simeq
K \otimes_{K_0} \Mat_m(K_0') \, .   \]
Since $\Mat_n(D)$ is a simple algebra, we conclude that 
$K \otimes_{K_0} K_0'$ is a field. Moreover, the index of 
$\Mat_m(K \otimes_{K_0} K_0')$ is $1$; hence, $D = K'$ 
is commutative, $K \otimes_{K_0} K_0' = K'$, and $m = n$. 

Now~\eqref{e.End} tells us that $N \simeq M_0^n$ as a $A_{K_0}$-module,
for some indecomposable $A_{K_0}$-module $M_0$. 
Since $K \otimes_{K_0} N \simeq M^n$, by 
the Krull-Schmidt theorem $K \otimes_{K_0} M_0 \simeq M$.
Thus $M$ descends to $K_0$, as claimed.
\end{proof}   

\section{Proof of Theorem~\ref{thm.main1}}

We begin with a simple criterion for the existence of a minimal field of
definition.

\begin{lem} \label{lem.reduction}
Let $A$ be a finite-dimensional $F$-algebra, and $K/F$ be a field extension,
and $M$ be an $A_K$-module, satisfying conditions (a) and (b) below.
Then $M$ has a minimal field of definition.

\smallskip
(a) Suppose $M$ descends to an intermediate field 
$F \subset L \subset K$, i.e., $M \simeq K \otimes_L N$ for 
some $A_L$-module $N$.
Then $N$ further descends to a subfield $F \subset E \subset L$, 
where $[E : F] < \infty$.  

\smallskip
(b) Suppose $M$ descends to an intermediate field $F \subset E \subset K$
such that $[E:F] < \infty$. That is, $M \simeq K \otimes_{E} N$ for
some $A_E$-module $N$. Then $N$ has a minimal field of definition
$E_{min} \subset E$.
\end{lem}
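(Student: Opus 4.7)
The plan is to produce a candidate minimal field of definition for $M$ using (a) and (b), and then verify its minimality.

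First, apply (a) with $L = K$ (and $N = M$) to find an intermediate field $F \subset E \subset K$ with $[E:F] < \infty$ such that $M$ descends to $E$. Write $M \simeq K \otimes_E N$ for some $A_E$-module $N$. Now apply (b) to obtain a minimal field of definition $E_{\min} \subset E$ for $N$. Since $N$ descends to $E_{\min}$, Lemma~\ref{lem.linear}(a) implies that $M$ also descends to $E_{\min}$. I claim this $E_{\min}$ is a minimal field of definition for $M$.

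To verify the claim, let $L$ be any intermediate field to which $M$ descends. By (a), $L$ contains a finite subextension $F \subset E' \subset L$ to which $M$ descends, so it suffices to show $E_{\min} \subset E'$. Form the compositum $EE' \subset K$, which remains finite over $F$. Extending scalars from $E$ to $EE'$, set $P := EE' \otimes_E N$, so that $M \simeq K \otimes_{EE'} P$. By (b) applied to the finite extension $EE'$, the $A_{EE'}$-module $P$ has a minimal field of definition $F_0 \subset EE'$. Now $P$ descends to $E$ by construction, and $P$ also descends to $E'$ by Lemma~\ref{lem.linear}(a) applied with $E' \subset EE'$ (using that $M$ descends to $E'$). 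Minimality of $F_0$ therefore forces $F_0 \subset E \cap E'$. A second application of Lemma~\ref{lem.linear}(a), this time to $N$ and $P$ with $F_0 \subset E \subset EE'$, shows that the descent of $P$ to $F_0$ produces a descent of $N$ to $F_0$. Minimality of $E_{\min}$ for $N$ then yields $E_{\min} \subset F_0 \subset E'$, as required.

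The main obstacle is bookkeeping: tracking several layers of descent (of $M$, $N$, and $P$) across multiple fields $E$, $E'$, $EE'$, $F_0$, $E_{\min}$, and invoking Lemma~\ref{lem.linear}(a) with the correct data in each instance. The key conceptual move is to use the compositum $EE'$ to mediate between the two a priori unrelated finite fields of definition $E$ and $E'$; once $M$ is pulled up to $EE'$, hypothesis (b) produces a common refinement $F_0$ contained in both, which lets the minimality of $E_{\min}$ over $E$ be transferred to minimality of $E_{\min}$ over $K$.
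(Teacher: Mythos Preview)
Your proof is correct and follows essentially the same route as the paper's: obtain a finite field of definition $E$ via (a), extract $E_{\min}$ via (b), and then for an arbitrary field of definition $L$ pass through a finite $E'\subset L$ and the compositum $EE'$ to compare. The only cosmetic difference is that the paper phrases the compositum step as showing that $E_{\min}$ is independent of the choice of $E$ (invoking Lemma~\ref{lem.linear}(b) directly), whereas you unwind the same comparison through repeated applications of Lemma~\ref{lem.linear}(a) to obtain the chain $E_{\min}\subset F_0\subset E'\subset L$.
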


\begin{proof} Condition (a) implies that $M$ is defined over some 
$F \subset E \subset K$ with $[E:F] < \infty$. Let the $A_E$-module
$N$ and the field $E_{min} \subset E$ be as in (b).

We claim that $E_{min}$ is independent of the choice of $E$. That is,
suppose $F \subset E' \subset K$ is another field of definition 
of $M$ with $[E':F] < \infty$,
$M := K \otimes_{E'} N'$ for some $A_{E'}$-module $N'$.  
Let $E'_{min} \subset E'$ be the minimal field of
definition of $N'$, so that $N' := E' \otimes_{E'_{min}} N'_{min}$. 
Then our claim asserts that $E_{min} = E_{min}'$. 
If we can prove this claim, then clearly $E_{min}$ is the 
minimal field of definition for $M$. 
Our proof of the claim will proceed in two steps.

First assume $E \subset E'$. By Lemma~\ref{lem.linear}(b), 
$E'_{min}$ is a minimal field of definition for $N$. By uniqueness 
of the minimal field of definition for $N$, $E_{min} = E'_{min}$.  

Now suppose $F \subset E \subset K$ and $F \subset E' \subset K$ are 
fields of definition for $M$ such that $[E:F] < \infty$ and $[E':F] < \infty$.
Let $E''$ be the composite of $E$ and $E'$ in $K$ and
$E''_{min}$ be the minimal field of definition of $N_{E''} \simeq N'_{E''}$.
(Note that $N_{E''}$ and $N'_{E''}$ become isomorphic over $K$; hence, 
by Theorem~\ref{thm.nd}, they are isomorphic over $E''$.) 
Then, $[E'' : F] < \infty$,
and $E, E' \subset E''$. As we just showed, $E_{min} = E''_{min}$  and 
$E'_{min} = E''_{min}$. Thus $E_{min} = E'_{min}$, as desired.
\end{proof}

We now proceed with the proof of Theorem~\ref{thm.main1}.

\begin{reduction} \label{red.finite1}
For the purpose of proving Theorem~\ref{thm.main1}, we may assume without 
loss of generality that 

\smallskip
(a) $K$ is a finite extension of $F$.

\smallskip
(b) $K$ is a Galois extension of $F$.
\end{reduction}

\begin{proof} (a) follows from Lemma~\ref{lem.reduction}.
Indeed, we are assuming that Theorem~\ref{thm.main1} holds whenever 
$K$ is a finite extension of $F$. That is, condition (b) 
of Lemma~\ref{lem.reduction} holds. On the other hand, 
condition (a) of Lemma~\ref{lem.reduction} follows from 
Lemma~\ref{lem.fin-gen}.

(b) By part (a), we may assume that
$K/F$ is finite. Let $L$ be the normal closure of $K$ over $F$. 
Then $L/F$ is finite Galois. Lemma~\ref{lem.linear}(b)
now tells us that if
$M_L := L \otimes_K M$ has a minimal field of definition
then so does $M$.
\end{proof}

\begin{lem} Let $F$ be a $C_1$-field, $A$ be 
a finite-dimensional $F$-algebra, $K/F$ 
be a finite Galois extension, and $M$ be an $A_K$-module. The Galois
group $G := \Gal(K/F)$ acts on the set 
of isomorphism classes of $A_K$-modules via
\[ g \colon N \to  {\,}^g N := K  \otimes_{g} N \, . \]
Let $G_M$ be the stabilizer of $M$ under this action.
Then the fixed field $K^{G_M}$ of $G_M$ is the minimal field 
of definition for $M$. 
\end{lem}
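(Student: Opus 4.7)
The plan is to establish two containments, showing that $K^{G_M}$ is both contained in every field of definition of $M$ and is itself a field of definition, hence the minimal one.

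For the first containment, recall that $K/F$ Galois implies $K/L$ is Galois for any intermediate field $L$, so $L = K^{\Gal(K/L)}$. Suppose $M$ descends to $L$, i.e., $M \cong K \otimes_L N$ for some $A_L$-module $N$. For any $g \in \Gal(K/L)$, the assignment $a \otimes n \mapsto g(a) \otimes n$ is well-defined on $K \otimes_L N$ (since $g$ fixes $L$ pointwise) and gives an $A_K$-linear isomorphism ${}^g M \cong M$. Hence $\Gal(K/L) \subseteq G_M$, and passing to fixed fields yields $K^{G_M} \subseteq L$.

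For the second containment, set $L := K^{G_M}$, so $K/L$ is Galois with group $G_M$ and $[K:L] = |G_M|$. Let $N$ denote $M$ viewed as an $A_L$-module via restriction along the inclusion $A_L \hookrightarrow A_K$. The standard Galois-theoretic decomposition $K \otimes_L K \cong \prod_{g \in G_M} K$ induces an $A_K$-module isomorphism
\[ K \otimes_L N \;\cong\; \bigoplus_{g \in G_M} {}^g M. \]
Since every $g \in G_M$ satisfies ${}^g M \cong M$ by definition of $G_M$, the right-hand side is isomorphic to $M^{|G_M|} = M^{[K:L]}$. Thus $M^{[K:L]}$ descends to $L$. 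Now $[L:F] \leq [K:F] < \infty$ and $F$ is a $C_1$-field, so Proposition~\ref{prop.M^n} applies with $K_0 = L$ and $n = [K:L]$, yielding that $M$ itself descends to $L = K^{G_M}$.

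The $C_1$-hypothesis enters only through the final invocation of Proposition~\ref{prop.M^n}; without it, only $M^n$ (but not necessarily $M$) would be guaranteed to descend. All the serious work is thus absorbed into the earlier proposition, and the main thing to watch in writing the proof is keeping the Galois twists and the identification $K \otimes_L N \cong \bigoplus_{g} {}^g M$ consistent with the chosen convention for the action on isomorphism classes.
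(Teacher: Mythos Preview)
Your proof is correct and considerably more direct than the paper's. Both arguments handle the easy containment $K^{G_M}\subseteq L$ (for any field of definition $L$) in the same way. For the hard direction, the paper decomposes $M$ into indecomposables, groups them into $G_M$-orbits, and for each orbit works with the stabilizer $H\subseteq G_M$ of a single indecomposable summand $M_1$: it shows $M_1^{|H|}$ descends to $K^H$, applies Proposition~\ref{prop.M^n} to the indecomposable $M_1$, and then reassembles the orbit sum over $K^{G_M}$ via a Krull--Schmidt argument. You bypass all of this by restricting $M$ itself to $L=K^{G_M}$ and tensoring back up, obtaining $K\otimes_L M_{\downarrow L}\cong\bigoplus_{g\in G_M}{}^gM\cong M^{|G_M|}$ in one stroke, then invoking Proposition~\ref{prop.M^n} once for the whole module.

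The trade-off worth noting is that your argument uses Proposition~\ref{prop.M^n} in its stated generality (arbitrary finitely generated $M$), whereas the paper's proof of that proposition opens with ``By Fitting's Lemma, $\End^{ss}_{A_K}(M^n)\simeq \Mat_n(D)$,'' which is only valid when $M$ is indecomposable. The paper's more elaborate route through orbit sums thus has the virtue of invoking the proposition only in the indecomposable case it actually establishes. Your shortcut is legitimate given the proposition as stated; if you were writing this self-contained you would either need to extend the proof of Proposition~\ref{prop.M^n} to decomposable $M$ or revert to the paper's indecomposable decomposition.
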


\begin{proof}
Suppose $M$ is defined over $K_0$, where $F \subset K_0 \subset K$.
Then clearly ${\,}^g \! M \simeq M$ for every $g \in \Gal(K/K_0)$. Hence, 
$\Gal(K/K_0) \subset G_M$ and consequently, $K^{G_M} \subset K_0$. 
This shows that $K^{G_M}$ is contained in every field of definition 
of $M$.

It remans to show that $M$ descends to $K_0 := K^{G_M}$.
Write $M = M_1^{d_1} \oplus \dots \oplus M_r^{d_r}$, where
$M_1, \dots, M_r$ are distinct indecomposables. The condition 
that ${\, }^g \! M \simeq M$ for any $g \in G_M$ is equivalent to
the following: if $M_j \simeq {\, }^g \! M_i$ for some $g \in \Gal(K/K_0)$,
then $d_i = d_j$. Grouping $G_M$-conjugate indecomposables 
together, we see that $M \simeq S_1 \oplus \dots \oplus S_m$, where
each $S_1, \dots, S_m$ is the $G_M$-orbit sum of one of the
indecomposable modules $M_i$. (Here the orbit sums $S_1, \dots, S_m$ 
may not be distinct.) It thus suffices to show that each 
orbit sum is defined over $K_0$. 

Consider a typical $G_M$-orbit sum 
$S := M_1 \oplus \dots \oplus M_s$, where we renumber  
the indecomposable factors of $M$ so that $M_1, \dots, M_s$ are
the $G_M$-translates of $M_1$.
Let $H$ be the stabilizer of $M_1$ in $G_M$. That is,
\[ H := \{ h \in G_M \, | \, {\, }^h \! M_1 \simeq M_1 \} \, . \]
Let $K_1 : = K^H$. Then 
\[ K \otimes_{K_1} (M_1)_{\downarrow K_1} = \bigoplus_{h \in H} {\, }^h \! M_1
= M_1^{|H|} \, . \]
In particular, this tells us that $M_1^{|H|}$ descends to $K_1$. 
By Proposition~\ref{prop.M^n},
so does $M_1$. In other words, $M_1 \simeq K \otimes_{K_1} N_1$ for some
$K_1$-module $N$. We claim that 
\begin{equation} \label{e.M^n-2}
K \otimes_{K_0} (N_1)_{\downarrow K_0} \simeq S. 
\end{equation} 
If we can prove this claim, then $S$ descends to $K_0$, and we are done.

To prove the claim, note that on the one hand, 
\begin{equation} \label{e.M^n-3}
K \otimes_{K_0} (M_1)_{\downarrow K_0} =  \prod_{g \in G_M}
{\, }^g \! M_1 = S^{|H|} \, . 
\end{equation}
On the other hand, since $M_1 \simeq K \otimes_{K_1} N_1$, we have
\[ (M_1)_{\downarrow K_0} \simeq ((M_1)_{\downarrow K_1})_{\downarrow K_0}  
\simeq (N_1^{|H|})_{\downarrow K_0} \, , \]
and thus 
\begin{equation} \label{e.M^n-4}
K \otimes_{K_0} (M_1)_{\downarrow K_0} =  
(K \otimes_{K_0} ((N_1)_{\downarrow K_0})^{|H|}) \simeq  
(K \otimes_{K_0} (N_1)_{\downarrow K_0}))^{|H|} \, .
\end{equation}
Comparing~\eqref{e.M^n-3} and~\eqref{e.M^n-4}, we obtain
\[ (K \otimes_{K_0} (N_1)_{\downarrow K_0})^{|H|} \simeq S^{|H|} \, . \]
The desired isomorphism~\eqref{e.M^n-2} follows from this by 
the Krull-Schmidt theorem.
\end{proof}

\section{Algebras of finite representation type}

A finite-dimensional $F$-algebra $A$ is said to be {\em of finite 
representation type} if there are only finitely many 
indecomposable finitely generated $A$-modules (up to isomorphism).

\begin{thm} \label{thm.frt} 
Let $F$ be a $C_1$-field, $A$ be finite-dimensional $F$-algebra 
of finite representation 
type, and $K/F$ be a field extension (not necessarily algebraic) 
such that $F$ is perfectly closed in $K$. (That is, for 
every subextension $F \subset E \subset K$ with $[E:F] < \infty$,
$E$ is separable over $F$.) Suppose $M$ is an indecomposable $A_K$-module.
Then

\smallskip
(a) $M$ descends to an intermediate
subfield $F \subset E \subset K$ such that $[E : F] < \infty$.
 
\smallskip
(b) $M$ is a direct summand of $K \otimes_F N$ 
for some indecomposable $A_F$-module $N$.
\end{thm}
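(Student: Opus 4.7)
The plan is to prove (a) and (b) together, reducing first via Lemma~\ref{lem.fin-gen} to the case where $K/F$ is finitely generated. Indeed, if $M$ descends to an intermediate field $F \subset E \subset K$ with $E/F$ finitely generated, we may decompose the descended module into $A_E$-indecomposables and replace $M$ by the summand whose extension to $K$ recovers $M$; proving the theorem for this summand over $E$ (for which $F$ is still perfectly closed in $E$) yields it over $K$.

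Let $N_1,\dots,N_r$ be a complete list of non-isomorphic indecomposable $A_F$-modules. My plan for (b) is to show that $M$ is a direct summand of $K \otimes_F N_i$ for some $i$, by induction on $\trdeg(K/F)$. In the base case $\trdeg(K/F) = 0$, the assumption that $F$ is perfectly closed in the algebraic extension $K$ forces $K/F$ to be separable. Hence $K \otimes_F K \cong K \times L$ as $K$-algebras with $K$ the ``diagonal'' factor, which yields the $A_K$-module decomposition $K \otimes_F M_0 \cong M \oplus (L \otimes_K M)$, where $M_0$ denotes $M$ viewed as an $A_F$-module via restriction of scalars. Decomposing $M_0 \cong \bigoplus_j N_{i_j}$ into indecomposable $A_F$-modules and applying the Krull--Schmidt theorem gives $M \mid K \otimes_F N_{i_j}$ for some $j$. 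For the inductive step I would proceed by a specialization argument: spread $M$ out over a finitely generated $F$-subalgebra $R \subset K$ whose fraction field is $K$, pass to a sufficiently small affine open of $\Spec R$ so that the descended module becomes free, and specialize at a closed point whose residue field is a finite (separable) extension of $F$ contained in $K$. The finite representation type of $A$ is needed to ensure that the indecomposable structure of $M$ is preserved by a suitable specialization, reducing the transcendence degree by one.

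For part (a), once (b) is established I would analyze the decomposition of $K \otimes_F N$ for $N$ as in (b). The endomorphism algebra $\End_A(N)$ is a local $F$-algebra whose residue field $D$, being a finite-dimensional division algebra over a finite extension of the $C_1$-field $F$, is a commutative finite extension of $F$. The perfectly-closed hypothesis implies that $K \otimes_F D$ is isomorphic to a product of fields, each factor arising as the base change to $K$ of a finite extension of $F$ contained in $K$. This structure identifies a finite intermediate field $F \subset E \subset K$ over which some power $M^n$ is defined; Proposition~\ref{prop.M^n} then descends $M$ itself to $E$.

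The main obstacle will be the inductive step for (b), where the transcendental case requires a specialization argument that must preserve indecomposability. This is where both the finite representation type hypothesis on $A$ and the perfectly-closed hypothesis on $F \subset K$ (used to guarantee separability of the residue fields encountered in the specialization) play an essential role.
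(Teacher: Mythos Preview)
Your reduction to finitely generated $K/F$ and your base case for (b) are fine, but the specialization step has a genuine gap. A closed point of $\Spec R$ has residue field $\kappa$ which is a \emph{quotient} of $R$, not a subfield of $K$; there is no sense in which $\kappa\subset K$. More importantly, even granting a specialization $\mathcal{M}_\kappa$ that is indecomposable and satisfies $\mathcal{M}_\kappa \mid \kappa\otimes_F N_i$, you give no mechanism to deduce $M \mid K\otimes_F N_i$ at the generic point: being a direct summand is not obviously an open or closed condition on $\Spec R$, and indecomposability is not an open condition either. The sentence ``finite representation type is needed to ensure that the indecomposable structure of $M$ is preserved by a suitable specialization'' names the difficulty rather than resolving it. (There is also a structural confusion: specializing at a closed point lands you directly in transcendence degree zero, so there is no ``reducing by one''.) Your sketch of (a) then inherits this gap, since it relies on (b); and the claim that $K\otimes_F D$ is a product of fields needs justification, since the $C_1$ hypothesis does not make $F$ perfect.

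The paper's argument is entirely different and avoids both induction and specialization. It considers the restricted module $M_{\downarrow F}$, which is in general \emph{not} finitely generated over $A$; the finite representation type hypothesis enters via a theorem of Tachikawa guaranteeing that $M_{\downarrow F}$ nevertheless decomposes as a direct sum of finitely generated indecomposable $A$-modules. Pick one such summand $N$. Since $K\otimes_F M_{\downarrow F}\cong \bigoplus_B M$ is a direct sum of copies of $M$ (indexed by an $F$-basis $B$ of $K$), any finitely generated indecomposable summand of $K\otimes_F N$ lies in some $M^r$ and hence, by Krull--Schmidt, is isomorphic to $M$; this already gives (b). For (a) one analyzes the field $E=\End^{ss}_A(N)$ (commutative by the $C_1$ hypothesis), sets $F'=E\cap K$ (finite separable over $F$ by the perfectly-closed hypothesis), and checks that a suitable indecomposable summand $N_1$ of $F'\otimes_F N$ remains indecomposable over $K$, forcing $K\otimes_{F'}N_1\cong M$. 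The key ingredient you are missing is Tachikawa's decomposition theorem for infinitely generated modules over algebras of finite representation type.
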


\begin{proof} 
(a) Consider the $A$-module $M_{\downarrow \, F}$. 
Generally speaking this module is not finitely generated 
over $A$. Nevertheless, since $A$ has finite representation type, 
thanks to a theorem of Tachikawa~\cite[Corollary 9.5]{tachikawa},
$M_{\downarrow \, F}$ can be written as a direct sum of finitely generated
indecomposable $A$-modules. Denote one of these modules by $N$. That is,
\begin{equation} \label{e2.1}
M_{\downarrow \, F} \simeq N \oplus N' \, , 
\end{equation}
for some $A$-module $N'$ (not necessarily finitely generated).

Let us now take a closer look at $N$. 
By Fitting's lemma, $E := \End^{ss}_A(N)$ is 
a finite-dimensional division algebra over $F$.
Since $F$ is a $C_1$-field, $E$ is a field extension of $F$. 
Now set $F' := E \cap K$ and $m=[F':F]$.  Since $F$ 
is perfectly closed in $K$, $F'$ is finite and separable over $F$. 
Thus \[ \End^{ss}_A(F' \otimes_F N) \simeq F' \otimes_F \End^{ss}_A(N)  
\simeq E \times \dots \times E \, . \]
This tells us that over $F'$, $N$ decomposes into 
a direct sum of $m$ indecomposables,
\begin{equation} \label{e.N_1}
F' \otimes_F N = N_1 \oplus \dots \oplus N_m. 
\end{equation}
By the definition of $F'$, $K \otimes_{F'} E$ is a field. Hence,
each indecomposable $A_{F'}$-module $N_i$ remains indecomposable over $K$.

Tensoring both sides
of \eqref{e2.1} with $K$, we obtain an isomorphism of $A_K$-modules
\begin{align*} 
K \otimes M_{\downarrow \, F} & \simeq (K \otimes_F N) \oplus
(K \otimes_F N') \\
 & = (\bigoplus_{i = 1}^m K \otimes_{F'} N_i) \oplus (K \otimes_F N')  \\
&= (K \otimes_F N_1)  \oplus N'' \, , 
\end{align*}
where $N'' : = (\bigoplus_{i = 2}^m K \otimes_{F'} N_i)
\oplus (K \otimes_F N')$.
Note that \[ K \otimes M_{\downarrow \, F'} \simeq \bigoplus_{B} M \, , \]
where $B$ is a basis of $K$ as an $F'$-vector space. As we mentioned above,
$K \otimes_{F'} N_1$ is an indecomposable $A_K$-module. Since
$K \otimes_{F'} N_1$ is finitely generated and is contained in 
$\bigoplus_{B} M$, 
it lies in the direct sum of finitely many copies of $M$, 
say, in $M^r := M \oplus \dots \oplus M$ ($r$ copies). 
Thus we have maps
\[ K \otimes_F N_1 \hookrightarrow M^r \hookrightarrow \bigoplus_B M \twoheadrightarrow K \otimes_F N_1 \]
whose composite is the identity, and so $K \otimes_F N_1$ is isomorphic
to a direct summand of $M^r$. By the Krull-Schmidt Theorem,  
$K \otimes_{F'} N_1 \simeq M$. In particular, 
$M$ descends to $F'$, as claimed.

\smallskip
(b) By~\eqref{e.N_1}, $N$ is an indecomposable $A$-module, and
$N_1$ is a direct summand of $F' \otimes_F N$. Hence, 
$M \simeq K \otimes_{F'} N_1$ is a direct summand of $K \otimes_F N$,
as desired.
\end{proof}

\begin{cor} \label{cor.frt1}
Let $F$ be a $C_1$-field, $A$ be finite-dimensional $F$-algebra 
of finite representation type, and $K/F$ be a field extension 
such that $F$ is perfectly closed in $K$. Then $A_K$ is also
of finite representation type.
\end{cor}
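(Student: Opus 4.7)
The plan is to deduce this directly from Theorem~\ref{thm.frt}(b), which already does all the real work. That theorem asserts that, under the present hypotheses, every indecomposable $A_K$-module $M$ arises as a direct summand of $K \otimes_F N$ for some indecomposable finitely generated $A$-module $N$. So it suffices to show that only finitely many isomorphism classes of indecomposable $A_K$-modules can appear as summands of modules of the form $K \otimes_F N$ with $N$ an indecomposable $A$-module.

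First I would use the hypothesis that $A$ has finite representation type to fix a finite list $N_1, \dots, N_t$ of representatives of the isomorphism classes of finitely generated indecomposable $A$-modules. For each $i$, the $A_K$-module $K \otimes_F N_i$ is finitely generated (since $\dim_F N_i < \infty$ implies $\dim_K (K\otimes_F N_i) < \infty$), so by the Krull--Schmidt theorem it decomposes as a finite direct sum of indecomposable $A_K$-modules. Let $S_i$ denote the (finite) set of isomorphism classes of the indecomposable summands appearing in this decomposition.

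Combining Theorem~\ref{thm.frt}(b) with this observation, every indecomposable $A_K$-module has its isomorphism class lying in the finite union $S_1 \cup \cdots \cup S_t$. Hence $A_K$ admits only finitely many isomorphism classes of finitely generated indecomposable modules, i.e.\ $A_K$ is of finite representation type. There is no real obstacle here: once Theorem~\ref{thm.frt}(b) is in hand, the corollary is a one-step counting argument using Krull--Schmidt over $K$.
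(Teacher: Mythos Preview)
Your proposal is correct and follows essentially the same approach as the paper: list the finitely many indecomposable $A$-modules, invoke Theorem~\ref{thm.frt}(b) so that every indecomposable $A_K$-module is a summand of some $K\otimes_F N_i$, and then apply Krull--Schmidt to bound the number of such summands. The paper's proof is just a terser version of what you wrote.
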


\begin{proof}
By our assumption $A$ has finitely many indecomposable modules $N^{(1)},
\dots, N^{(d)}$. By Theorem~\ref{thm.frt}(b)
every indecomposable $A_K$-module 
is isomorphic to a direct summand of $K \otimes_F N^{(i)}$ for some $i$. 
By the Krull-Schmidt Theorem, each $K \otimes_F N^{(i)}$ has finitely
many direct summands (up to isomorphism), and the corollary follows.
\end{proof}

\section{Proof of Theorems~\ref{thm.main2} and~\ref{thm.main3}}

We will deduce Theorem~\ref{thm.main2} from 
Lemma~\ref{lem.reduction}. $M$ satisfies condition (b) 
of Lemma~\ref{lem.reduction} by Theorem~\ref{thm.main1}.
It thus remains to show that $M$ satisfies
condition (a) of Lemma~\ref{lem.reduction}.
For notational simplicity, we may
assume that $K = L$ and $M = N$. That is, we want to show that $M$
descends to some intermediate field $F \subset E \subset K$ with
$[E:F] < \infty$. Note that in the case, where $M$ is indecomposable,
this is precisely the content of Theorem~\ref{thm.frt}(a).

In general, write $M = M_1 \oplus \dots \oplus M_r$ as a direct
product of (not necessarily distinct) indecomposables.
By Theorem~\ref{thm.frt}(a), each $M_i$ descends to an intermediate field
$F \subset K_i \subset K$ such that $[K_i : F] < \infty$. 
Let $E$ be the compositum of $K_1, \dots, K_r$ inside $K$. Then 
$[E:F] < \infty$, and $M$ descends to $E$. 
This completes the proof of Theorem~\ref{thm.main2}. \qed

\smallskip
We now proceed with the proof of Theorem~\ref{thm.main3}.
Denote the perfect closure of $F$ in $K$ by $F^{pf}$.
By Theorem~\ref{thm.main2}, $M$ descends to an intermediate field
$F^{pf} \subset K_0 \subset K$ such that $[K_0: F^{pf}] < \infty$.
Hence, $K_0$ is algebraic over $F$, and consequently, $\ed(M) \leqslant
\trdeg_F(K_0) = 0$, as desired.
\qed

\section{An example}
\label{sect.example}

In this section we will show by example that both Theorem~\ref{thm.main2}
and~\ref{thm.main3} fail if we do not require 
$F$ to be a $C_1$-field.  Let $F = \bbQ$
and $A$ be the quaternion algebra 
\[ A=\bbQ \{ x, y \} /(x^2=y^2 = -1, \; xy = - yx). \] 
and $K/F$ be any field having two elements $a$ and $b$ satisfying 
$a^2+b^2=-1$.  Then $A$ has a two dimensional $A_K$-module $M$ given by
\begin{equation} \label{e.quaternion}
x \mapsto \begin{pmatrix}  a & b \\ b & -a \end{pmatrix},\qquad
y \mapsto \begin{pmatrix} b & -a \\ -a & -b \end{pmatrix} \, . 
\end{equation}

\begin{lem} \label{lem.quaternion} The following conditions on
an intermediate field $\bbQ \subset E \subset K$ are equivalent:

\smallskip
(a) $\phi$ descends to $E$, 

\smallskip
(b) $A$ splits over $E$,

\smallskip
(c) there exist elements $a_0$, $b_0 \in E$ such 
that $a_0^2 + b_0^2 = -1$.
\end{lem}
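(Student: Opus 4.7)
The plan is to prove $(a)\Leftrightarrow(b)$ and $(b)\Leftrightarrow(c)$, exploiting the fact that $A$ is the Hamilton quaternion algebra over $\bbQ$ and hence a central simple $\bbQ$-algebra of dimension $4$. For any field extension $L/\bbQ$, $A_L$ is therefore central simple of dimension $4$ over $L$, and so $A_L$ is either a division algebra or isomorphic to $\Mat_2(L)$.

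For $(a)\Leftrightarrow(b)$: The $A_K$-module $M$ is $2$-dimensional, so $A_K$ cannot be a division algebra; hence $A_K\cong\Mat_2(K)$ and, up to isomorphism, $M$ is the unique $2$-dimensional $A_K$-module. If $M$ descends to $E$, say $M\cong(M_0)_K$ with $M_0$ an $A_E$-module, then $\dim_E M_0=2$, and the resulting action map $A_E\to\End_E(M_0)\cong\Mat_2(E)$ is non-zero, hence injective (by simplicity of $A_E$), hence an isomorphism by dimension count, so $A$ splits over $E$. Conversely, if $A_E\cong\Mat_2(E)$ then $M_0:=E^2$ is a $2$-dimensional $A_E$-module and $(M_0)_K$ is a $2$-dimensional $A_K$-module; by the uniqueness just noted, $(M_0)_K\cong M$, so $M$ descends to $E$.

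For $(c)\Rightarrow(b)$: Substituting the given $a_0,b_0\in E$ for $a,b$ in \eqref{e.quaternion} produces matrices $X,Y\in\Mat_2(E)$; a direct calculation shows $X^2=Y^2=-I$ and $XY=-YX$, yielding a non-zero $E$-algebra homomorphism $A_E\to\Mat_2(E)$, which as in the previous paragraph must be an isomorphism. For $(b)\Rightarrow(c)$: I would invoke the standard splitting criterion for the quaternion algebra $(-1,-1)_E$, namely that it splits over $E$ if and only if $-1$ is a norm from $E(\sqrt{-1})/E$. If $\sqrt{-1}\in E$ then $-1=(\sqrt{-1})^2+0^2$ and (c) holds trivially; otherwise $[E(\sqrt{-1}):E]=2$ and the norm of $a_0+b_0\sqrt{-1}$ is $a_0^2+b_0^2$, so the splitting hypothesis provides the required $a_0,b_0\in E$.

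I do not anticipate a serious obstacle. The only step requiring external input is the invocation of the norm-form criterion in $(b)\Rightarrow(c)$, and the only case distinction is whether or not $\sqrt{-1}$ already lies in $E$. Everything else reduces either to the general fact that a $4$-dimensional central simple algebra admitting a module of dimension strictly less than $4$ must split, or to an explicit $2\times2$ matrix verification.
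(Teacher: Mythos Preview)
Your proof is correct and follows essentially the same approach as the paper's: the equivalence $(a)\Leftrightarrow(b)$ is handled identically via the fact that $A_E$ is a $4$-dimensional central simple algebra, and for $(b)\Leftrightarrow(c)$ the paper simply cites Hilbert's splitting criterion for quaternion algebras (Lam, \cite[Theorem III.2.7]{lam}), whereas you unpack that criterion via the norm form of $E(\sqrt{-1})/E$. The only cosmetic difference is that for $(b)\Rightarrow(a)$ the paper phrases the descent as a factorization $A\to A_E\cong\Mat_2(E)\hookrightarrow\Mat_2(K)$, while you invoke uniqueness of the $2$-dimensional simple $A_K$-module; these amount to the same thing.
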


\begin{proof} 
(a) $\Longrightarrow$ (b). Suppose $M$ descends 
to an $A_E$-module $N$.
Since $A_{E} := E \otimes_{\bbQ} A$ is a central simple
$4$-dimensional algebra over $E$, the homomorphism of algebras 
given by
\[ A_E \to \End_E(N) \simeq \Mat_2(E) \]
is an isomorphism. In other words, $E$ splits $A$. 

(b) $\Longrightarrow$ (a). Conversely, suppose $E$ splits $A$. Then 
the representation of $A \to \End_K (M)$ factors as 
follows:
\[ A \to E \otimes_{\bbQ} A \simeq \Mat_2(E) \to \Mat_2(K) 
\, . \]
This shows that $\phi$ descends to $E$. 

The equivalence of (b) and (c) a special case of 
Hilbert's criterion for the splitting of a quaternion algebra; 
see the equivalence of conditions (1) and (7) in~\cite[Theorem III.2.7]{lam} 
as well as Remark (B) on~\cite[p.~59]{lam}. 
\end{proof}

\begin{prop} \label{prop.quaternion}
Let $a$ and $b$ be independent variables over $\bbQ$, 
$E$ be the field of fractions of $\bbQ[a, b]/(a^2+b^2+1)$,
and $M$ be the $2$-dimensional $A_E$-module
given by~\eqref{e.quaternion}. Then

\smallskip
(a) $\ed(M) = 1$,

\smallskip
(b) $M$ does not have a minimal field of definition.
\end{prop}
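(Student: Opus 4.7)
For part (a), the upper bound $\ed(M) \leq 1$ is immediate, since $M$ is already defined over $E$ and $\trdeg_{\mathbb{Q}}(E) = 1$. For the lower bound, the plan is to combine Lemma~\ref{lem.quaternion} with the observation that the affine conic $C \colon a^2 + b^2 + 1 = 0$ is smooth and geometrically irreducible over $\mathbb{Q}$, so $\mathbb{Q}$ is algebraically closed in its function field $E$. If $M$ descended to a subfield $\mathbb{Q} \subset E_0 \subset E$ with $\trdeg_{\mathbb{Q}}(E_0) = 0$, then $E_0$ would be algebraic over $\mathbb{Q}$, hence equal to $\mathbb{Q}$; but Lemma~\ref{lem.quaternion} would then force Hamilton's quaternion algebra to split over $\mathbb{Q}$, which is absurd.

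For part (b), the plan is to exhibit a descending chain $E \supset E_3 \supset E_9 \supset E_{27} \supset \cdots$ of fields of definition for $M$ whose intersection equals $\mathbb{Q}$. Since $\mathbb{Q}$ is not a field of definition by part (a), no minimal field of definition can exist: any candidate would have to lie inside every $E_{3^k}$, hence inside their intersection, which would then have to contain a field of definition contradicting part (a).

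To construct the $E_n$, I work over the quadratic extension $\mathbb{Q}(i)$, where the conic rationalizes. Setting $u := a + bi$, the identity $(a+bi)(a-bi) = -1$ gives $a - bi = -1/u$, so $E \otimes_{\mathbb{Q}} \mathbb{Q}(i) = \mathbb{Q}(i)(u)$ is purely transcendental. The nontrivial element $\iota$ of $\Gal(\mathbb{Q}(i)/\mathbb{Q})$ acts on $\mathbb{Q}(i)(u)$ by $i \mapsto -i$ and $u \mapsto -1/u$, with fixed field $E$. For each odd integer $n \geq 1$, let $f_n$ be the $\mathbb{Q}(i)$-algebra endomorphism of $\mathbb{Q}(i)(u)$ sending $u \mapsto u^n$. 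A short calculation on the generator $u$ shows $\iota \circ f_n = f_n \circ \iota$, the oddness of $n$ being exactly what makes $(-1/u)^n = -1/u^n$. Hence $f_n$ restricts to a $\mathbb{Q}$-algebra embedding $E \hookrightarrow E$; define $E_n := f_n(E)$, so that $E_n \cong E$ as abstract $\mathbb{Q}$-fields and $[E : E_n] = n$. Because $A$ splits over $E$, it also splits over the isomorphic field $E_n$, so Lemma~\ref{lem.quaternion} certifies each $E_n$ as a field of definition for $M$.

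Finally, to show $\bigcap_k E_{3^k} = \mathbb{Q}$: inside $\mathbb{Q}(i)(u)$ the corresponding chain is $\mathbb{Q}(i)(u^{3^k})$, and a routine degree count shows $\bigcap_k \mathbb{Q}(i)(u^{3^k}) = \mathbb{Q}(i)$ (numerator and denominator of any common element, in lowest terms, must have degrees divisible by arbitrarily large powers of $3$). Since $\bigcap_k E_{3^k}$ is contained in both $E$ and $\mathbb{Q}(i)$, and $E \cap \mathbb{Q}(i) = \mathbb{Q}$ by the algebraic-closure observation from part (a), the intersection equals $\mathbb{Q}$, as required. I expect the main technical point of the whole argument to be the $\iota$-equivariance check for $f_n$ (together with the fact that $n$ must be restricted to odd integers); once this is in hand, everything else is a formal consequence of Lemma~\ref{lem.quaternion} and elementary field theory.
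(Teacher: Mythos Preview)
Your proof of (a) is essentially identical to the paper's. For (b), both arguments rest on the same construction: after adjoining $i$, the conic becomes rational with parameter $u = a + bi$, and the power maps $u \mapsto u^n$ (for odd $n$) descend to $\bbQ$-embeddings $E \hookrightarrow E$ whose images are strictly smaller fields of definition. The paper packages this as a one-step descent from an \emph{arbitrary} field of definition $E_1$: using Lemma~\ref{lem.quaternion}(c) it reduces to $E_1 = \bbQ(a_1,b_1)$ with $a_1^2+b_1^2=-1$, sets $z=a_1+b_1 i$, and passes to $E_3=\bbQ(a_3,b_3)$ where $z^3=a_3+b_3 i$, checking $[E_1:E_3]=3$. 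Thus no field of definition can be minimal, and no intersection computation is needed. You instead build the whole chain $E \supset E_3 \supset E_9 \supset \cdots$ starting from $E$ itself, compute its intersection to be $\bbQ$, and then invoke the definition of minimality. Both routes are correct; the paper's is a bit more direct, while yours makes the Galois-equivariance $\iota \circ f_n = f_n \circ \iota$ (and why $n$ must be odd) explicit, which the paper leaves implicit in the formula $z^n=a_n+b_n i$.
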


\begin{proof} (a) The assertion of part (a), follows from
\cite[Example 6.1]{krp}. For the sake of completeness, we will 
give an independent proof.

Suppose $M$ descends to an intermediate
subfield $\bbQ \subset E_0 \subset E$. 
Since $\trdeg_{\bbQ}(E) = 1$,  $\trdeg_{\bbQ}(E_0) = 0$ or $1$.
Our goal is to show that $\trdeg_{\bbQ}(E_0) \neq 0$. Assume 
the contrary, i.e., $E_0$ is algebraic over $\bbQ$.

Note that $E$ is the function field of the conic
curve $a^2 + b^2 + c^2 = 0$ in $\bbP^2$. Since this curve 
is absolutely irreducible, $\bbQ$ is algebraically closed in $E$. 
Thus the only possibility for $E_0$ is $E_0 = \bbQ$, On the other hand,
$M$ does not descend to $\bbQ$ by Lemma~\ref{lem.quaternion}, 
a contradiction.

(b) Suppose $M$ descends to $E_1 \subset E$. Our goal is to show that 
$M$ descends to a proper subfield $E_3 \subset E_1$. 
By Lemma~\ref{lem.quaternion}(c) there exist
$a_1$ and $b_1$ in $E_1$ such that $a_1^3 + b_1^3 = -1$.
If $\bbQ(a_1, b_1)$ is properly
contained in $E_1$, then we are done. Thus we may assume without loss of
generality that $E_1 = \bbQ(a_1, b_1)$. Set $E_3 := \bbQ(a_3, b_3)$. 
where $a_3 := a_1^3 - 3 a_1 b_1^2$ and $b_3 = 3a_1^2 b_1 - b_1^3$. 
We claim that
(i) $A$ splits over $E_3$, and (ii) $E_3 \subsetneq E_1$.

\smallskip
In order to establish (i) and (ii), let us consider the following diagram. 
\[ \xymatrix{  &   E_1(i)  \ar@{-}[d]  \\
E_3(i)  \ar@{-}[d] \ar@{-}[ur] & E_1 \\
E_3 \ar@{-}[ur] &  }  \]
Here as usual, $i$ is a primitive $4$th root of $1$.
It is easy to see that $E_1(i) = \bbQ(i)(a_1, b_1) = \bbQ(i)(z)$ is
a purely transcendental extension of $\bbQ(i)$, where
$z = a_1 + b_1 i$ and $\dfrac{1}{z} = - a_1 + b_1 i$. Similarly 
$E_3(i) = \bbQ(i)(z^3)$, where $z^3 = a_3 + b_3 i$ and 
$\dfrac{1}{z^3} = -a_3 + b_3 i$. In particular, this shows 
$a_3^2 + b_3^2 = -1$, thus proving (i). Moreover,
since $z$ is transcendental over $\bbQ(i)$, we have
$[E_1(i): E_3(i)] = [\bbQ(i)(z): \bbQ(i)(z^3)] = 3$ and thus
\[ [E_1: E_3] = \frac{[E_3(i): E_3] \cdot [E_1(i): E_3(i)]}{[E_1(i): E_1]} =
\frac{2 \cdot 3}{2} = 3 \, . \]
This proved (ii).
\end{proof}

\begin{remark} Write $z^n = a_n + b_ni$ for suitable $a_n, b_n \in E_1$
and set $[E_1:E_n] = n$. We showed above that $[E_1:E_3] = 3$ and thus
$E_3 \subsetneq E_1$.  The same argument yields  $[E_1:E_n] = n$ 
for any positive integer $n$.
\end{remark}

\section{Proof of Theorem~\ref{thm.main4}}

We shall actually prove a stronger, more natural theorem, about
blocks of finite group algebras. Theorem \ref{thm.main4} will follow
from the fact that $p$-Sylow $p$ of a finite group $G$ are cyclic
if and only if every block over a field $F$ of characteristic $p$ 
has cyclic defect.

\begin{thm} \label{thm.block}
Let $B$ be a block of a finite group algebra $FG$, where $F$
is a field of characteristic $p$. Then the following are equivalent:
\begin{itemize}
\item[\rm (1)] $B$ has cyclic defect,
\item[\rm (2)] $\ed(\Mod_B)=0$,
\item[\rm (3)] $\ed(\Mod_B)<\infty$.
\end{itemize}
\end{thm}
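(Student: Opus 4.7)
The plan is to establish (2) $\Rightarrow$ (3) $\Rightarrow$ (1) $\Rightarrow$ (2), from which Theorem~\ref{thm.main4} will follow since a $p$-Sylow subgroup of $G$ is cyclic if and only if every block of $FG$ has cyclic defect group.

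The implication (2) $\Rightarrow$ (3) is immediate. For (3) $\Rightarrow$ (1), I argue by contrapositive. If $B$ has non-cyclic defect group $D$, then $D$ contains an elementary abelian subgroup $E \cong (\bbZ/p)^2$. The construction in \cite[Theorem 14.1]{krp} yields, for each $n \geqslant 1$, an indecomposable $FE$-module of essential dimension at least $n$. The Green correspondence transfers these to indecomposable $FG$-modules with vertex $E$; since $E \subseteq D$, the Brauer correspondence allows one to arrange that they lie in the block $B$. Their essential dimensions remain unbounded, proving $\ed(\Mod_B) = \infty$.

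For (1) $\Rightarrow$ (2), the key direction, assume $B$ has cyclic defect $D$. By Dade's theorem $B$ is of finite representation type, and the same is true of $B \otimes_F F'$ for every field extension $F'/F$, since the summand blocks of $F'G$ still have defect group $D$. The plan is to re-run the proofs of Theorems~\ref{thm.main1}, \ref{thm.main2} and \ref{thm.main3} for $B$, dropping the $C_1$-hypothesis on $F$. That hypothesis is used in those proofs only through Proposition~\ref{prop.M^n} and Theorem~\ref{thm.frt}(a), and in each case only to conclude that $D_0 = \End^{ss}_{B_{K_0}}(N)$, the semisimple quotient of the endomorphism algebra of an indecomposable $B_{K_0}$-module $N$, is commutative. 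For blocks of cyclic defect this commutativity holds automatically: over $\overline{K_0}$, every indecomposable $\overline{K_0}G$-module has local endomorphism ring with residue field $\overline{K_0}$ (finite-dimensional division algebras over an algebraically closed field being trivial), and a Krull--Schmidt plus Galois descent argument identifies $D_0$ with the fixed subfield of $\overline{K_0}$ under the stabilizer of a chosen $\overline{K_0}$-indecomposable summand of $N \otimes_{K_0} \overline{K_0}$, which is a commutative field.

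The principal obstacle is making this Galois descent rigorous in the presence of inseparable extensions that can arise in characteristic $p$. A clean route is to pass first to the perfect closure $K_0^{\mathrm{pf}}$ of $K_0$ inside $\overline{K_0}$, where base change is compatible with the Jacobson radical, and then observe that descent from $K_0^{\mathrm{pf}}$ back to $K_0$ carries no obstruction because the intervening extension is purely inseparable. One must also check that the Schur index $d$ (the common multiplicity with which each $\overline{K_0}$-indecomposable summand of $N \otimes_{K_0} \overline{K_0}$ appears) is $1$ for blocks with cyclic defect --- a consequence of the Brauer tree structure and of the triviality of modular Schur indices in characteristic $p$. Once these points are secured, Theorems~\ref{thm.main1}--\ref{thm.main3} apply to $B$ verbatim and yield $\ed(M) = 0$ for every $B_K$-module $M$, so $\ed(\Mod_B) = 0$.
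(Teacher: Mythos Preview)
Your treatment of (1) $\Rightarrow$ (2) diverges from the paper. Throughout the paper (as announced in the abstract) the base field $F$ is assumed to be $C_1$, and under that standing hypothesis the paper disposes of (1) $\Rightarrow$ (2) in one line: a block with cyclic defect has finite representation type, so Theorem~\ref{thm.main3} applies directly. Your attempt to remove the $C_1$ hypothesis is aimed at a stronger statement than the paper proves, and by your own account the argument is incomplete. In particular, the identification of $D_0$ with a subfield of $\overline{K_0}$ via ``Krull--Schmidt plus Galois descent'' is precisely the vanishing of a Brauer obstruction; that is what the $C_1$ hypothesis is for, and it does not follow formally from the cyclic-defect assumption in the way you suggest.

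For (3) $\Rightarrow$ (1) your route is genuinely different from the paper's and, as written, has two gaps. First, you assert that Green correspondence transports a $KE$-module of large essential dimension to a $KG$-module of large essential dimension, but you do not say why: if the Green correspondent $M'$ descends to $K_0$, the original $M$ is only a summand of $\Res^G_E M'$, and summands of a module defined over $K_0$ need not themselves descend to $K_0$. Second, invoking ``Brauer correspondence'' to place the resulting modules into the given block $B$ is not justified; an indecomposable with vertex $E$ lies in \emph{some} block whose defect group contains a conjugate of $E$, but nothing you have said singles out $B$.

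The paper sidesteps both problems by a direct construction. It works with the whole defect group $D$ (not a rank-two subgroup), sets $r = \rank(D/\Phi(D)) \geqslant 2$, and over the purely transcendental extension $K = F(t_{1,1},\dots,t_{n,r})$ builds explicit two-dimensional $KD$-modules $M_1,\dots,M_n$. With $M = \bigoplus_i M_i$ the module $e\cdot\Ind_{D,G}(M)$ lies in $B$ by construction. The lower bound $\ed\bigl(e\cdot\Ind_{D,G}(M)\bigr) \geqslant n(r-1)$ is obtained by restricting back to $D$: the key fact is that $M$ is a summand of $\Res_{G,D}\bigl(e\cdot\Ind_{D,G}(M)\bigr)$ because $D$ is a defect group of $B$, and via Mackey the Jordan-type data of elements of $J(KD)/J^2(KD)$ on this restriction record a finite configuration of hyperplanes in $\bbP^{r-1}$ that must descend along with the module. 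Lemma~\ref{lem.subvariety} then forces any field of definition to contain the $n(r-1)$ algebraically independent ratios $t_{i,j}/t_{i,1}$. This geometric invariant replaces your correspondence argument and handles both the block membership and the essential-dimension bound at once.
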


The implication (1) $\Longrightarrow$ (2) is a direct consequence
of Theorem~\ref{thm.main3}. The implication (2) $\Longrightarrow$ (3) 
is obvious. 

The remainder of this section will be devoted to proving that
(3) $\Longrightarrow$ (1). 
We shall show that if $B$ has non-cyclic defect, then
$\ed(\Mod_B)=\infty$. Let $K$ be an extension field of $F$, 
let $e$ be the block idempotent of $B$,
let $D$ be a defect group of $B$,
and let $N=\Phi(D)$, the Frattini subgroup of $D$.
If $D$ is not cyclic, $D/N$ is elementary abelian of rank $r\ge 2$,
with basis the images of elements $g_1,\dots,g_r\in D$.
Since $D$ is a defect group of $B$, any $KD$-module $M$
is a summand of $\Res_{G,D}(e.\Ind_{D,G}(M))$.

Now let $n>0$, and let $K=F(t_{1,1},\dots,t_{n,r})$ be a function field in $nr$
indeterminates, and let $M_i$ $(1\le i\le n)$ be the two dimensional
$KD$-module
\[ g_j \mapsto \begin{pmatrix} 1 & t_{i,j} \\ 0 & 1 \end{pmatrix}. \]
Then $J^2(KD)$ is in the kernel of $M_i$, so $M_i$ is really a module
for $KD/J^2(KD)$, which has a basis $1, (g_1-1),\dots,(g_r-1)$. 
The last $r$ elements of this list form a basis for $J(KD)/J^2(KD)$,
and we form a vector space $V$ with basis $(g_1-1),\dots,(g_r-1)$.
The kernel of $M_i$ as a module for $KD/J^2(KD)$ is the codimension 
one subspace $H_i$ of \[ J(KD)/J^2(KD)\cong V \] 
given by
\begin{equation} \label{e.H_i}
H_i : = \{\lambda_j(g_j-1) \mid \sum_j t_{i,j}\lambda_j=0\}. 
\end{equation}
By the Mackey decomposition theorem, 
the module $M'_i=\Res_{G,D}(e.\Ind_{D,G}(M_i))$ is a 
direct sum of at least one copy of $M_i$, 
some conjugates of $M_i$ by elements of $N_G(D)$, 
and some modules of the form 
$\Ind_{D\cap {^g}D,D}\Res_{{^g}D,D\cap {^g}D}{^g}M$.
It follows that the Jordan canonical form of elements of $V$
on $M'_i$ is constant, except on a set $S_i$, which is a finite 
union of hyperplanes $N_G(D)$-conjugates of $H_i$ and linear 
subspaces of smaller dimension. 

Now let $M :=\bigoplus_i M_i$. Our goal is to show that
\[ \ed(e.\Ind_{D,G}(M)) \geqslant n (r - 1) \, . \]
This will imply that $\ed(\Mod_B)\ge n(r-1)$ for every $n>0$ and 
thus $\ed(\Mod_B)=\infty$, as desired.

Note that $e.\Ind_{D,G}(M)$ is a module
whose restriction to $D$ is $\bigoplus_i M'_i$. If $e.\Ind_{D,G}(M)$
descends to an intermediate subfield $F \subset K_0 \subset K$, then 
so does the set $\bigcup_i S_i\subset V$ and its natural image in
$\bbP(V) = \bbP^{r-1}$, which we will denote by $S$.
To complete the proof of Theorem~\ref{thm.block},  it remains to
show that if $S$ descends to $K_0$, then 
\begin{equation} \label{e.block}
\trdeg_F(K_0) \geqslant n (r - 1) \, . 
\end{equation}  

\begin{lem} \label{lem.subvariety} 
Let $S \subset \bbP^{r-1}$ be a projective variety defined over a field
$K$.  Assume that a hyperplane $H$ given by 
$a_1 x_1 + a_2 x_2 + \dots + a_r x_r = 0$ is an irreducible 
component of $S$ for some $a_1, \dots, a_r \in K$ (not all zero).
Suppose $S$ descends to a subfield $K_0 \subset K$. 
Then each ratio $a_j/a_l$ is algebraic over $K_0$, as long as $a_l \neq 0$.
\end{lem}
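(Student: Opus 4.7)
The plan is to apply Galois descent to a model of $S$ over $K_0$. The point is that the hyperplane $H \times_K \bar K$, being an absolutely irreducible component of the base change of a $K_0$-variety, is already defined over the algebraic closure of $K_0$; this will force the ratios of the $a_j$ to be algebraic over $K_0$.

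By hypothesis there is a closed subscheme $S_0 \subset \bbP^{r-1}_{K_0}$ with $S = S_0 \times_{K_0} K$. I fix an algebraic closure $\bar K$ of $K$ and let $\bar K_0 \subset \bar K$ denote the algebraic closure of $K_0$ in $\bar K$. The irreducible components of $S_0 \times_{K_0} \bar K_0$ are finite in number and permuted by $\Gal(\bar K_0/K_0)$, so each is defined over some finite subextension of $\bar K_0/K_0$. The key input I will need is that base change along any extension of algebraically closed fields induces a bijection between irreducible components of a scheme of finite type; this is standard and follows from the fact that over an algebraically closed field irreducibility coincides with geometric irreducibility.

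Since $H$ is a hyperplane it is absolutely irreducible, and hence $H \times_K \bar K$ is an irreducible component of $S \times_K \bar K = S_0 \times_{K_0} \bar K$. By the bijection above it equals $\tilde H \times_{\bar K_0} \bar K$ for a unique irreducible component $\tilde H$ of $S_0 \times_{K_0} \bar K_0$. Let $P \subset \bar K_0[x_1, \dots, x_r]$ be the homogeneous prime ideal of $\tilde H$. Then $P \cdot \bar K[x_1, \dots, x_r] = \bigl(\sum_j a_j x_j\bigr)$; since this principal ideal has a generator in degree one, the degree-one part of $P$ is a one-dimensional $\bar K_0$-subspace, and any nonzero element $\ell = \sum_j c_j x_j$ of it (with $c_j \in \bar K_0$) generates $P$: the principal ideal $(\ell) \cdot \bar K[x_1, \dots, x_r]$ is a height-one prime contained in $P \cdot \bar K[x_1, \dots, x_r] = \bigl(\sum_j a_j x_j\bigr)$, so the two coincide, and by faithful flatness $P = (\ell)$. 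Consequently $\sum_j c_j x_j = \lambda \sum_j a_j x_j$ in $\bar K[x_1, \dots, x_r]$ for some $\lambda \in \bar K^{\times}$, so $c_j = \lambda a_j$ for every $j$; whenever $a_l \neq 0$ this gives $a_j/a_l = c_j/c_l \in \bar K_0$, which is algebraic over $K_0$.

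The main technical point is the bijection between irreducible components under extension of algebraically closed fields; this is standard, but I would either cite a reference (e.g.\ from EGA or the Stacks Project) or verify it directly by observing that each component of $S_0 \times_{K_0} \bar K_0$ is geometrically integral, and hence remains integral after further base change to $\bar K$.
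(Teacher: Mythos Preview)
Your proof is correct and follows essentially the same approach as the paper: pass to the algebraic closure $\bar K_0$ of $K_0$, use that irreducible components of a finite-type scheme over an algebraically closed field are in bijection with those after any further field extension, and conclude that the hyperplane $H$ is already defined over $\bar K_0$. The paper's version is terser---after reducing to $K_0$ algebraically closed it simply asserts that every irreducible component of $S$ is defined over $K_0$ and reads off the conclusion by viewing $H$ as a $K_0$-point of the dual projective space $\check{\bbP}^{r-1}$---whereas you spell out the descent of the defining linear form explicitly via the homogeneous ideal, which amounts to the same thing.
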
 

To deduce the inequality~\eqref{e.block} from Lemma~\ref{lem.subvariety}, 
recall that in our case $S$ is the union of the hyperplanes $H_1, \dots, H_n$,
a finite number of other hyperplanes (translates of $H_1, \dots, H_n$
by elements of $N_G(D)$) and lower-dimensional linear subspaces 
of $\bbP(V) = \bbP^{r-1}$.
In the basis $(g_1 - 1), \dots, (g_r - 1)$ of $V$, 
$H_i$ is given by $t_{i, 1} x_1 + t_{i, 2} x_2 + \dots + t_{i, r} x_r = 0$;
see~\eqref{e.H_i}.  Thus by Lemma~\ref{lem.subvariety} 
the elements $t_{i,j}/t_{i, 1}$ are algebraic
over $K_0$ for every $i = 1, \dots, n$ 
and every $j = 2, \dots, r$. In other words, if $K_1$ is the algebraic
cosure of $K_0$ in $K$, then each $t_{i,j}/t_{i, 1} \in K_1$, and thus
$\trdeg_F(K_0) = \trdeg_F(K_1) \geqslant n (r-1)$, as desired.

\begin{proof}[Proof of Lemma~\ref{lem.subvariety}] 
We may assume without loss of generality that
$K_0$ is algebraically closed. To reduce to this case, we
replace $K_0$ by its algebraic closure $\overline{K_0}$ and $K$ by
a compositum of $K$ and $\overline{K_0}$. If we know that
each $a_{i, j}$ is algebraic over $\overline{K_0}$ (or equivalently,
is contained in $\overline{K_0}$), then $a_{i, j}$ is algebraic over $K_0$.

Now assume that $K_0$ is algebraically closed. Since $S$ is defined over $K_0$,
every irreducible component of $S$ is defined over $K_0$. In particular,
$H$ is defined over $K_0$. That is, the point $(a_1: \dots : a_r)$
of the dual projective space $\check{\bbP}^{r-1}$ 
is defined over $K_0$. Equivalently, $a_i/a_j \in K_0$ whenever $a_l \neq 0$.
This completes the proof of the claim and thus
of Lemma~\ref{lem.subvariety} and Theorem~\ref{thm.block}. 
\end{proof}

\section*{Acknowledgements}
The second author would like to thank Julia Pevtsova for helpful discussions.

\end{document}